\numberwithin{equation}{section}
\theoremstyle{plain}
\newtheorem{theorem}[equation]{Theorem}
\newtheorem{lemma}[equation]{Lemma}
\newtheorem{proposition}[equation]{Proposition}
\theoremstyle{definition}
\newtheorem{example}[equation]{Example}
\newtheorem*{openproblem}{Open Problem}
\newcommand{\R}{{\mathbb R}}
\newcommand{\N}{{\mathbb N}}
\newcommand{\Om}{\Omega}
\providecommand{\vint}[1]{\mathchoice
          {\mathop{\vrule width 5pt height 3 pt depth -2.5pt
                  \kern -9pt \kern 1pt\intop}\nolimits_{\kern -5pt{#1}}}
          {\mathop{\vrule width 5pt height 3 pt depth -2.6pt
                  \kern -6pt \intop}\nolimits_{\kern -3pt{#1}}}
          {\mathop{\vrule width 5pt height 3 pt depth -2.6pt
                  \kern -6pt \intop}\nolimits_{\kern -3pt{#1}}}
          {\mathop{\vrule width 5pt height 3 pt depth -2.6pt
                  \kern -6pt \intop}\nolimits_{\kern -3pt{#1}}}}
\newcommand{\eps}{\varepsilon}
\newcommand{\loc}{\mathrm{loc}}
\newcommand{\BV}{\mathrm{BV}}
\newcommand{\liploc}{\mathrm{Lip}_{\mathrm{loc}}}
\newcommand{\ch}{\text{\raise 1.3pt \hbox{$\chi$}\kern-0.2pt}}
\DeclareMathOperator{\capa}{Cap}
\DeclareMathOperator{\dist}{dist}
\DeclareMathOperator{\Lip}{Lip}
\begin{document}
\title{Quasiopen sets, bounded variation and\\
lower semicontinuity in metric spaces
\footnote{{\bf 2010 Mathematics Subject Classification}: 30L99, 31E05, 26B30.
\hfill \break {\it Keywords\,}: metric measure space, function of bounded variation,
total variation, quasiopen set,
lower semicontinuity, uniform absolute continuity
\hfill \break \break \textbf{Acknowledgments.} The research was
funded by a grant from the Finnish Cultural Foundation.
The author wishes to thank Nageswari Shan\-mu\-ga\-lingam for giving helpful comments
on the manuscript.
}}
\author{Panu Lahti}
\maketitle

\begin{abstract}
In the setting of a metric space that is equipped with a doubling measure and supports a
Poincar\'e inequality, we show that the total variation of functions of bounded variation
is lower semicontinuous with respect to $L^1$-convergence in every \emph{1-quasiopen set}.
To achieve this, we first prove a new characterization of the total variation in
$1$-quasiopen sets. Then we utilize the lower semicontinuity to show that the
variation measures of a sequence of functions of bounded variation converging in the strict sense are uniformly absolutely continuous with respect to the $1$-capacity.
\end{abstract}

\section{Introduction}

Let $(X,d,\mu)$ be a complete metric space equipped with a Radon measure $\mu$.
The total variation of a function of bounded variation ($\BV$ function) $u$ in an open set $\Om\subset X$ is defined by means of approximation with locally Lipschitz functions, that is,
\begin{equation}\label{eq:definition of total variation in intro}
\|Du\|(\Om):=\inf\left\{\liminf_{i\to\infty}\int_\Om g_{u_i}\,d\mu:\, u_i\in \Lip_{\loc}(\Om),\, u_i\to u\textrm{ in } L^1_{\loc}(\Om)\right\},
\end{equation}
where each $g_{u_i}$ is a $1$-weak upper gradient of $u_i$ in $\Om$; see Section \ref{sec:prelis} for definitions.
From this definition it easily follows that the total variation is lower semicontinuous with respect to $L^1$-convergence in open sets, that is,
if $U\subset X$ is open and $u_i\to u$ in $L^1_{\loc}(U)$, then
\begin{equation}\label{eq:intro lower semicontinuity}
\Vert Du\Vert(U)\le\liminf_{i\to\infty}\Vert Du_i\Vert(U).
\end{equation}
For arbitrary (measurable) sets $U\subset X$ we cannot
define $\Vert Du\Vert(U)$ simply by replacing $\Om$
with $U$ in the definition of the total variation, because then
the total variation would not yield a Radon measure, see Example \ref{ex:total variation}.
Instead, $\Vert Du\Vert(U)$ is defined by means of approximation
with open sets containing $U$, following \cite{M}.

On the other hand, a set $U\subset X$ is said to be \emph{1-quasiopen} if for every $\eps>0$ there exists
an open set $G\subset X$ such that $\capa_1(G)<\eps$ and $U\cup G$ is open.
Quasiopen sets and related concepts of \emph{fine potential theory} have been recently studied in the metric
setting in e.g. \cite{BB-OD,BBL-CCK,BBL-WC,BBM-QP} in the case $p>1$.
See also the monographs \cite{MZ} and \cite{HKM} for the Euclidean theory and its history in the unweighted and weighted settings, respectively.
In the case $p=1$, analogous concepts have been recently studied in \cite{L3,L,LaSh}.

In this paper, we assume that the measure $\mu$ is doubling and that the space supports a $(1,1)$-Poincar\'e inequality,
and then we show that if $U\subset X$ is a $1$-quasiopen set and $\Vert Du\Vert(U)<\infty$, then
the total variation $\Vert Du\Vert(U)$ can be equivalently defined by replacing $\Om$
with $U$ in \eqref{eq:definition of total variation in intro}.
This is Theorem \ref{thm:characterization of total variational}.
Using this result,
we can then show that the lower semicontinuity \eqref{eq:intro lower semicontinuity} holds
true also for every
$1$-quasiopen set $U$, if $\Vert Du\Vert(U)<\infty$ and $u_i\to u$ in $L^1_{\loc}(U)$. This is
Theorem \ref{thm:lower semic in quasiopen sets}.
Such a lower semicontinuity result may be helpful in solving various minimization problems,
for example in the upcoming work \cite{LMS}.

The notion of \emph{uniform integrability} of a sequence of functions $(g_i)\subset L^1(X)$
is often useful in analysis. This involves uniform absolute continuity with respect to the ambient measure. That is, for every $\eps>0$ there exists $\delta>0$ such that if $A\subset X$ with $\mu(A)<\delta$, then $\int_A g_i\,d\mu<\eps$ for every $i\in\N$.

The variation measure $\Vert Du\Vert$ of a $\BV$ function $u$ is, of course, not always
absolutely continuous with respect to $\mu$. On the other hand, it is a well-known fact in the Euclidean setting that $\Vert Du\Vert$ is absolutely continuous with
respect to the $1$-capacity $\capa_1$. The proof of this fact is essentially the same 
in the more general metric setting, see \cite[Lemma 3.9]{L2}.

A sequence of $\BV$ functions $u_i$ is said to converge \emph{strictly} to a
$\BV$ function $u$ if $u_i\to u$ in $L^1(X)$ and
$\Vert Du_i\Vert(X)\to \Vert Du\Vert(X)$. Given such a sequence, we show that for every
$\eps>0$
there exists $\delta>0$ such that if $A\subset X$ with $\capa_1(A)<\delta$, then
$\Vert Du_i\Vert(A)<\eps$ for every $i\in\N$. In other words, the variation measures $\Vert Du_i\Vert$ are uniformly absolutely continuous with respect to the $1$-capacity.
This is Theorem \ref{thm:uniform absolute continuity}. The proof combines the previously discussed
lower semicontinuity result with Baire's category theorem.

\section{Notation and definitions}\label{sec:prelis}

In this section we introduce the notation, definitions, and assumptions used in the paper.

Throughout this paper, $(X,d,\mu)$ is a complete metric space equipped
with a metric $d$ and a Borel regular outer measure $\mu$ that satisfies a doubling property, that is,
there is a constant $C_d\ge 1$ such that
\[
0<\mu(B(x,2r))\leq C_d\mu(B(x,r))<\infty
\]
for every ball $B=B(x,r)$ with center $x\in X$ and radius $r>0$.
When we want to specify that a constant $C$
depends on the parameters $a,b, \ldots,$ we write $C=C(a,b,\ldots)$.

A complete metric space equipped with a doubling measure is proper,
that is, closed and bounded sets are compact, see e.g. \cite[Proposition 3.1]{BB}.
For a $\mu$-measurable set $A\subset X$, we define $L^1_{\loc}(A)$ to consist of functions $u$ on $A$
such that for every $x\in A$ there exists $r>0$ such that $u\in L^1(A\cap B(x,r))$.
Other local spaces of functions are defined similarly.
For any open set $\Omega\subset X$,
every function in the class $L^1_{\loc}(\Omega)$ is in $L^1(\Om')$ for every open $\Omega'\Subset\Omega$.
Here $\Omega'\Subset\Omega$ means that $\overline{\Omega'}$ is a
compact subset of $\Omega$.

For any set $A\subset X$ and $0<R<\infty$, the restricted spherical Hausdorff content
of codimension one is defined to be
\[
\mathcal{H}_{R}(A):=\inf\left\{ \sum_{i=1}^{\infty}
  \frac{\mu(B(x_{i},r_{i}))}{r_{i}}:\,A\subset\bigcup_{i=1}^{\infty}B(x_{i},r_{i}),\,r_{i}\le R\right\}.
\]
The codimension one Hausdorff measure of $A\subset X$ is then defined to be
\[
\mathcal{H}(A):=\lim_{R\rightarrow 0}\mathcal{H}_{R}(A).
\]

The measure theoretic boundary $\partial^{*}E$ of a set $E\subset X$ is the set of points $x\in X$
at which both $E$ and its complement have positive upper density, i.e.
\[
\limsup_{r\to 0}\frac{\mu(B(x,r)\cap E)}{\mu(B(x,r))}>0\quad\;
  \textrm{and}\quad\;\limsup_{r\to 0}\frac{\mu(B(x,r)\setminus E)}{\mu(B(x,r))}>0.
\]
The measure theoretic interior and exterior of $E$ are defined respectively by
\begin{equation}\label{eq:definition of measure theoretic interior}
I_E:=\left\{x\in X:\,\lim_{r\to 0}\frac{\mu(B(x,r)\setminus E)}{\mu(B(x,r))}=0\right\}
\end{equation}
and
\begin{equation}\label{eq:definition of measure theoretic exterior}
O_E:=\left\{x\in X:\,\lim_{r\to 0}\frac{\mu(B(x,r)\cap E)}{\mu(B(x,r))}=0\right\}.
\end{equation}
Note that we always have a partitioning of the space into the disjoint sets
$\partial^*E$, $I_E$, and $O_E$.

By a curve we mean a rectifiable continuous mapping from a compact interval of the real line
into $X$.
The length of a curve $\gamma$
is denoted by $\ell_{\gamma}$. We will assume every curve to be parametrized
by arc-length, which can always be done (see e.g. \cite[Theorem~3.2]{Hj}).
A nonnegative Borel function $g$ on $X$ is an upper gradient 
of an extended real-valued function $u$
on $X$ if for all curves $\gamma$, we have
\begin{equation}\label{eq:definition of upper gradient}
|u(x)-u(y)|\le \int_\gamma g\,ds,
\end{equation}
where $x$ and $y$ are the end points of $\gamma$. We interpret $|u(x)-u(y)|=\infty$ whenever  
at least one of $|u(x)|$, $|u(y)|$ is infinite.
We define the local Lipschitz constant of a locally Lipschitz function $u\in\liploc(X)$ by
\[
\Lip u(x):=\limsup_{r\to 0}\sup_{y\in B(x,r)\setminus \{x\}}\frac{|u(y)-u(x)|}{d(y,x)}.
\]
Then $\Lip u$ is an upper gradient of $u$, see e.g. \cite[Proposition 1.11]{Che}.
Upper gradients were originally introduced in \cite{HK}.

If $g$ is a nonnegative $\mu$-measurable function on $X$
and (\ref{eq:definition of upper gradient}) holds for $1$-almost every curve,
we say that $g$ is a $1$-weak upper gradient of~$u$. 
A property holds for $1$-almost every curve
if it fails only for a curve family with zero $1$-modulus. 
A family $\Gamma$ of curves is of zero $1$-modulus if there is a 
nonnegative Borel function $\rho\in L^1(X)$ such that 
for all curves $\gamma\in\Gamma$, the curve integral $\int_\gamma \rho\,ds$ is infinite.
Of course, by replacing $X$ with a set $A\subset X$ and considering curves $\gamma$ in $A$, we can talk about a function $g$ being a ($1$-weak) upper gradient of $u$ in $A$.
A $1$-weak upper gradient can always be perturbed in a set of $\mu$-measure zero,
see \cite[Lemma 1.43]{BB}, and so we
understand it to be defined only $\mu$-almost everywhere.

Given a $\mu$-measurable set $U\subset X$, we consider the following norm
\[
\Vert u\Vert_{N^{1,1}(U)}:=\Vert u\Vert_{L^1(U)}+\inf \Vert g\Vert_{L^1(U)},
\]
where the infimum is taken over all $1$-weak upper gradients $g$ of $u$ in $U$.
The substitute for the Sobolev space $W^{1,1}$ in the metric setting is the Newton-Sobolev space
\[
N^{1,1}(U):=\{u:\|u\|_{N^{1,1}(U)}<\infty\}.
\]
We understand every Newton-Sobolev function to be defined at every $x\in U$
(even though $\Vert \cdot\Vert_{N^{1,1}(U)}$ is, precisely speaking, then only a seminorm).
The Newton-Sobolev space with zero boundary values is defined as
\[
N_0^{1,1}(U):=\{u|_{U}:\,u\in N^{1,1}(X)\textrm{ and }u=0\textrm{ in }X\setminus U\}.
\]
Thus $N_0^{1,1}(U)$ is a subclass of $N^{1,1}(U)$, and it can also be considered as a subclass of
$N^{1,1}(X)$, as we will do without further notice.

It is known that for any $u\in N_{\loc}^{1,1}(U)$, there exists a minimal $1$-weak
upper gradient of $u$ in $U$, always denoted by $g_{u}$, satisfying $g_{u}\le g$ 
$\mu$-almost everywhere in $U$, for any $1$-weak upper gradient $g\in L_{\loc}^{1}(U)$
of $u$ in $U$ \cite[Theorem 2.25]{BB}.
For more on Newton-Sobolev spaces, we refer to \cite{S, BB, HKST}.

The $1$-capacity of a set $A\subset X$ is given by
\[
\capa_1(A):=\inf \Vert u\Vert_{N^{1,1}(X)},
\]
where the infimum is taken over all functions $u\in N^{1,1}(X)$ such that $u\ge 1$ in $A$.
We know that $\capa_1$ is an outer capacity, meaning that
\[
\capa_1(A)=\inf\{\capa_1(\Om):\,\Om\supset A\textrm{ is open}\}
\]
for any $A\subset X$, see e.g. \cite[Theorem 5.31]{BB}.
For basic properties satisfied by the $1$-capacity, such as monotonicity and countable subadditivity, see e.g. \cite{BB}.

We say that a set $U\subset X$ is $1$-quasiopen if for every $\eps>0$ there exists an
open set $G\subset X$ such that $\capa_1(G)<\eps$ and $U\cup G$ is open.

Next we recall the definition and basic properties of functions
of bounded variation on metric spaces, following \cite{M}. See also e.g. \cite{AFP, EvaG92, Fed, Giu84, Zie89} for the classical 
theory in the Euclidean setting.
Given a function $u\in L^1_{\loc}(X)$, we define the total variation of $u$ in $X$ by
\[
\|Du\|(X):=\inf\left\{\liminf_{i\to\infty}\int_X g_{u_i}\,d\mu:\, u_i\in \Lip_{\loc}(X),\, u_i\to u\textrm{ in } L^1_{\loc}(X)\right\},
\]
where each $g_{u_i}$ is the minimal $1$-weak upper gradient of $u_i$.
We say that a function $u\in L^1(X)$ is of bounded variation, 
and denote $u\in\BV(X)$, if $\|Du\|(X)<\infty$.
By replacing $X$ with an open set $\Omega\subset X$ in the definition of the total variation, we can define $\|Du\|(\Omega)$.
For an arbitrary set $A\subset X$, we define
\[
\|Du\|(A)=\inf\{\|Du\|(\Omega):\, A\subset\Omega,\,\Omega\subset X
\text{ is open}\}.
\]
In general, if $A\subset X$ is an arbitrary set, we understand the statement $\Vert Du\Vert(A)<\infty$
to mean that there exists some open set $\Om\supset A$ such that $u\in L^1_{\loc}(\Om)$ and $\Vert Du\Vert(\Om)<\infty$.
If $\Vert Du\Vert(\Omega)<\infty$, $\|Du\|(\cdot)$ is a finite Radon measure on $\Omega$ by \cite[Theorem 3.4]{M}.
A $\mu$-measurable set $E\subset X$ is said to be of finite perimeter if $\|D\ch_E\|(X)<\infty$, where $\ch_E$ is the characteristic function of $E$.
The perimeter of $E$ in $\Omega$ is also denoted by
\[
P(E,\Omega):=\|D\ch_E\|(\Omega).
\]

We have the following coarea formula from \cite[Proposition 4.2]{M}: if $\Omega\subset X$ is an open set and $u\in L^1_{\loc}(\Omega)$, then
\begin{equation}\label{eq:coarea}
\|Du\|(\Omega)=\int_{-\infty}^{\infty}P(\{u>t\},\Omega)\,dt.
\end{equation}

We will assume throughout the paper that $X$ supports a $(1,1)$-Poincar\'e inequality,
meaning that there exist constants $C_P\ge 1$ and $\lambda \ge 1$ such that for every
ball $B(x,r)$, every $u\in L^1_{\loc}(X)$,
and every upper gradient $g$ of $u$,
we have 
\[
\vint{B(x,r)}|u-u_{B(x,r)}|\, d\mu 
\le C_P r\vint{B(x,\lambda r)}g\,d\mu,
\]
where 
\[
u_{B(x,r)}:=\vint{B(x,r)}u\,d\mu :=\frac 1{\mu(B(x,r))}\int_{B(x,r)}u\,d\mu.
\]

\section{Preliminary results}\label{sec:preliminary results}

In this section we consider certain preliminary results that we will need in proving the main
theorems.
We start with the following simple result concerning Newton-Sobolev functions with zero
boundary values.

\begin{lemma}\label{lem:zero boundary values}
Let $\Om\subset X$ be an open set, let $u\in N^{1,1}(\Om)$ with $-1\le u\le 1$, and
let $\eta\in N^{1,1}_0(\Om)$ with $0\le\eta\le 1$. Then $\eta u\in N_0^{1,1}(\Om)$ with a
$1$-weak upper gradient $\eta g_u+|u|g_{\eta}$ (in $X$).
\end{lemma}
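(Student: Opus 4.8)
The plan is to first establish that $\eta u \in N^{1,1}(\Om)$ with the claimed upper gradient in $\Om$, and then to promote this to a statement about $N^{1,1}_0(\Om)$, i.e. to the whole space $X$, by using the zero boundary values of $\eta$. For the first part, I would use the Leibniz-type rule for products of Newton--Sobolev functions. Since $u\in N^{1,1}(\Om)$ is bounded ($|u|\le 1$) and $\eta\in N^{1,1}_0(\Om)\subset N^{1,1}(\Om)$ is bounded ($0\le\eta\le 1$), the product $\eta u$ lies in $L^1(\Om)$, and the standard computation (see e.g.\ \cite[Theorem 2.15]{BB} or the analogous result for upper gradients) shows that $\eta g_u + |u| g_\eta$ is a $1$-weak upper gradient of $\eta u$ in $\Om$; here one uses $|u|\le 1$ and $\eta\le 1$ to see this gradient is in $L^1(\Om)$. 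At this stage $\eta u\in N^{1,1}(\Om)$.

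Next I would extend $\eta u$ by zero to all of $X$ and check that the extension, call it $v$, belongs to $N^{1,1}(X)$ with $\eta g_u+|u|g_\eta$ (extended by zero) as a $1$-weak upper gradient \emph{in $X$}. The key point is that $\eta\in N^{1,1}_0(\Om)$, so by definition there is a function $\widetilde\eta\in N^{1,1}(X)$ with $\widetilde\eta=0$ in $X\setminus\Om$ and $\widetilde\eta=\eta$ in $\Om$. I would work with the product $\widetilde\eta\,\widetilde u$, where $\widetilde u$ is any bounded extension of $u$ (for instance one could truncate, or one can avoid extending $u$ at all by noting that only values of $\widetilde u$ on $\{\widetilde\eta\ne 0\}\subset\Om$ matter). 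Applying the same Leibniz rule on $X$ to $\widetilde\eta\,\widetilde u$, and observing that $\widetilde\eta\,\widetilde u$ agrees with $v$ (it vanishes outside $\Om$ since $\widetilde\eta$ does, and equals $\eta u$ inside $\Om$), we get $v\in N^{1,1}(X)$ with a $1$-weak upper gradient that equals $\eta g_u+|u|g_\eta$ $\mu$-a.e.\ in $\Om$ and equals $0$ $\mu$-a.e.\ in $X\setminus\Om$ (where both $\widetilde\eta$ and hence its gradient vanish). Since $v=0$ on $X\setminus\Om$, this gives $\eta u=v|_\Om\in N^{1,1}_0(\Om)$ by definition.

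The main obstacle, and the step requiring the most care, is the passage across $\bdy$: one must ensure that the product extends with no spurious contribution to the upper gradient concentrated on $\bdy$. This is exactly what the hypothesis $\eta\in N^{1,1}_0(\Om)$ buys us — the minimal $1$-weak upper gradient $g_{\widetilde\eta}$ of the global representative vanishes $\mu$-a.e.\ outside $\Om$, so the product rule on $X$ produces a gradient supported (up to a null set) in $\Om$. A secondary technical point is that $u$ is only given on $\Om$ and need not extend to a Newton--Sobolev function on $X$; I would handle this by noting that in the product $\widetilde\eta\,\widetilde u$ the factor $\widetilde u$ is only ever multiplied by $\widetilde\eta$ or $g_{\widetilde\eta}$, both of which vanish outside $\Om$, so any bounded Borel extension $\widetilde u$ of $u$ with $|\widetilde u|\le 1$ suffices and the curve estimates defining the $1$-weak upper gradient are unaffected by the choice of extension on $1$-a.e.\ curve. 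Finally I would double-check the $L^1(X)$-integrability of $\eta g_u+|u|g_\eta$: it is bounded by $g_u+g_\eta$, both of which are in $L^1(\Om)$ (the latter in $L^1(X)$), and outside $\Om$ the function is zero, so integrability on $X$ is immediate.
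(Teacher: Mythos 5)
Your first step coincides with the paper's: the Leibniz rule of \cite[Theorem 2.15]{BB} gives $\eta u\in N^{1,1}(\Om)$ with $1$-weak upper gradient $\eta g_u+|u|g_\eta$ in $\Om$. The divergence, and the gap, lies in how you promote this to a statement on all of $X$. You propose to extend $\eta u$ by zero and to ``apply the same Leibniz rule on $X$'' to $\widetilde\eta\,\widetilde u$, where $\widetilde u$ is merely a bounded Borel extension of $u$. The Leibniz rule is a theorem about Newton--Sobolev functions; since $u$ need not admit any extension in $N^{1,1}(X)$ (as you yourself note), its hypotheses are not met and this application is not available. The patch you offer --- that the curve estimates are unaffected by the choice of extension because $\widetilde\eta$ and $g_{\widetilde\eta}$ vanish outside $\Om$ --- identifies the right mechanism but is not a proof: the entire content of this step is to verify the upper gradient inequality for $1$-a.e.\ curve $\gamma$ meeting both $\Om$ and $X\setminus\Om$. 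For such a curve one must decompose $\gamma^{-1}(\Om)$ into its (possibly countably many) open component intervals, apply the inequality in $\Om$ on compact subintervals of each component, and pass to the limit at the endpoints (which lie in $\gamma^{-1}(X\setminus\Om)$) using that $\widetilde\eta\circ\gamma$ is absolutely continuous for $1$-a.e.\ $\gamma$ and vanishes there, so that $|\eta u|\le\widetilde\eta$ forces the zero-extension to tend to $0$ along the curve; one then sums over the components. None of this gluing argument appears in your proposal, and without it the claim that the zero-extension lies in $N^{1,1}(X)$ with the stated upper gradient is unproved.

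The paper sidesteps all of this: from $-\eta\le\eta u\le\eta$ with $\eta\in N^{1,1}_0(\Om)$ it invokes the sandwich lemma \cite[Lemma 2.37]{BB} to conclude $\eta u\in N_0^{1,1}(\Om)$ directly, and only afterwards observes, via \eqref{eq:upper gradient in constant set} (so that $g_\eta=0$ $\mu$-a.e.\ on $X\setminus\Om$), that $\eta g_u+|u|g_\eta$ extended by zero is a $1$-weak upper gradient in $X$. If you prefer not to cite the sandwich lemma, your route can be completed, but you must carry out the curve-by-curve gluing across $\partial\Om$ sketched above rather than appeal to a product rule whose hypotheses fail.
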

Here $g_{u}$ and $g_{\eta}$ are the minimal $1$-weak upper gradients of $u$ and $\eta$
(in $\Om$ and $X$, respectively).
By \cite[Corollary 2.21]{BB} we know that if $v\in N^{1,1}(X)$, then
\begin{equation}\label{eq:upper gradient in constant set}
g_v=0\ \ \textrm{in}\ \ \{v=0\}
\end{equation}
($\mu$-almost everywhere, to be precise). Thus $g_{\eta}=0$ outside $\Om$,
and so the function $\eta g_u+|u|g_{\eta}$
can be interpreted to take the value zero outside $\Om$.

\begin{proof}
By the Leibniz rule, see \cite[Theorem 2.15]{BB}, we know that $\eta u\in N^{1,1}(\Om)$ with a
$1$-weak upper gradient $\eta g_u+|u|g_\eta$ in $\Om$.
Moreover, $-\eta\le \eta u\le \eta\in N^{1,1}_0(\Om)$, and then by \cite[Lemma 2.37]{BB} we conclude
$\eta u\in N_0^{1,1}(\Om)$. Finally, by \eqref{eq:upper gradient in constant set} we know that
$\eta g_u+|u|g_{\eta}$ is a $1$-weak upper gradient of $u\eta$ in $X$.
\end{proof}

The following two lemmas describe two ways of enlarging a set without
increasing the $1$-capacity significantly.

\begin{lemma}[{\cite[Lemma 3.1]{L}}]\label{lem:covering G by a set of finite perimeter}
For any $G\subset X$ and $\eps>0$ there exists an open set $V\supset G$ with
$\capa_1(V)\le C_1(\capa_1(G)+\eps)$ and $P(V,X)\le C_1(\capa_1(G)+\eps)$,
for a constant $C_1=C_1(C_d,C_P,\lambda)\ge 1$.
\end{lemma}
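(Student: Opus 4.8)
The plan is to reduce the statement to covering $G$ by a family of small balls $B_i=B(x_i,r_i)$ whose codimension-one sizes $\mu(B_i)/r_i$ sum to a quantity controlled by $\capa_1(G)+\eps$; the union of such a family is automatically open, and both its $1$-capacity and its perimeter are bounded by $\sum_i\mu(B_i)/r_i$ up to a constant depending only on $C_d$.

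\emph{Reduction to a superlevel set of finite perimeter.} Write $M:=\capa_1(G)+\eps$. Since $\capa_1$ is an outer capacity, pick an open $\Om\supset G$ with $\capa_1(\Om)<M$, and then $u\in N^{1,1}(X)$ with $u\ge 1$ on $\Om$ and $\|u\|_{N^{1,1}(X)}<M$; truncating, we may assume $0\le u\le 1$, so $u\equiv 1$ on $\Om$. Since $N^{1,1}(X)\subset\BV(X)$ with $\|Du\|(X)\le\int_X g_u\,d\mu$, the coarea formula \eqref{eq:coarea} gives $\int_0^1 P(\{u>t\},X)\,dt\le M$, while Cavalieri's principle gives $\int_0^1\mu(\{u>t\})\,dt\le\int_X u\,d\mu<M$. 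By Chebyshev's inequality there is $t_0\in(0,1)$ with $P(E,X)\le 2M$ and $\mu(E)\le 2M$, where $E:=\{u>t_0\}$; moreover, since $u\equiv1>t_0$ on the open set $\Om$, we have $G\subset\Om\subset\inte E\subset I_E$.

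\emph{Boxing and the choice of $V$.} Now apply a boxing inequality to the set $E$ of finite perimeter and finite measure: for any prescribed radius bound $R>0$ there exist balls $B_i=B(x_i,r_i)$ with $r_i\le R$, $I_E\subset\bigcup_i B_i$, and $\sum_i\mu(B_i)/r_i\le C_2\big(\mu(E)+P(E,X)\big)$ for a constant $C_2=C_2(C_d,C_P,\lambda)$; this is the standard boxing inequality, which follows from the Poincar\'e inequality and covers the whole measure-theoretic interior $I_E$. Take $R=1/2$, and, replacing each $r_i$ by a slightly larger radius in $(r_i,2r_i)$ if necessary, arrange in addition that $P(B_i,X)\le C_d^2\,\mu(B_i)/r_i$ for every $i$: this is possible since applying \eqref{eq:coarea} to the $1$-Lipschitz function $d(\cdot,x_i)$ yields $\int_{r_i}^{2r_i}P(B(x_i,\rho),X)\,d\rho\le\mu(B(x_i,2r_i))$, and the covering property together with the bound on $\sum_i\mu(B_i)/r_i$ is preserved up to the doubling constant. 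Set $V:=\bigcup_i B_i$; then $V$ is open with $\mu(V)<\infty$, and $G\subset\Om\subset I_E\subset V$.

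\emph{The two estimates.} For each ball, since $r_i\le 1$ the Lipschitz cutoff $\min\{1,\max\{0,2-d(\cdot,x_i)/r_i\}\}$ is admissible for $\capa_1(B_i)$ and shows $\capa_1(B_i)\le C(C_d)\,\mu(B_i)/r_i$; by countable subadditivity of the capacity, $\capa_1(V)\le\sum_i\capa_1(B_i)\le C(C_d)\sum_i\mu(B_i)/r_i$. For the perimeter, the finite sets $\bigcup_{i\le N}B_i$ converge to $V$ in $L^1(X)$, so lower semicontinuity and finite subadditivity of the perimeter give $P(V,X)\le\sum_i P(B_i,X)\le C_d^2\sum_i\mu(B_i)/r_i$. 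Combining these with the boxing bound and $\mu(E)+P(E,X)\le 4M$, both $\capa_1(V)$ and $P(V,X)$ are at most $C_1(\capa_1(G)+\eps)$ for a constant $C_1=C_1(C_d,C_P,\lambda)$, as required. The main obstacle is securing the boxing inequality in exactly the form needed — covering all of $I_E$ (so that the open neighborhood $\Om$ of $G$, hence $G$ itself, is covered, not merely $G$ up to an $\mathcal H$-null set), with freely small radii and constants depending only on $C_d,C_P,\lambda$ — together with the minor but necessary adjustment of the radii to also obtain the per-ball perimeter bound $P(B_i,X)\lesssim\mu(B_i)/r_i$.
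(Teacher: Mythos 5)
Your argument is correct and is essentially the same boxing-based proof as the one the paper relies on: the paper itself gives no argument here but defers to \cite[Lemma 3.1]{L}, whose proof (like the paper's own proof of Lemma \ref{lem:capacity and Newtonian function}) runs exactly along your lines --- a near-optimal admissible Newton--Sobolev function via outer regularity of $\capa_1$, a good superlevel set $E$ chosen by coarea and Chebyshev, the boxing inequality of \cite{HaKi}/\cite{KKST} applied to $E$, and then capacity and perimeter estimates for the resulting union of balls. The one point you flag, covering all of $I_E$ with radii bounded by a fixed $R$ and constant $C(C_d,C_P,\lambda)$ (for that fixed $R$), is indeed what \cite[Lemma 4.2]{HaKi} provides, so there is no gap.
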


\begin{proof}
See \cite[Lemma 3.1]{L}; note that there was a slight error in the
formulation, as the possibility $\capa_1(G)=0$ was not taken into account,
but this is easily corrected by adding an $\eps$-term in suitable places.
\end{proof}

\begin{lemma}\label{lem:capacity and Newtonian function}
Let $G\subset X$ and $\eps>0$. There exists an open set $V\supset G$ with $\capa_1(V)\le C_2(\capa_1(G)+\eps)$ and a
function $\eta\in N^{1,1}_0(V)$ with $0\le\eta\le 1$ on $X$, $\eta=1$ on $G$, and $\Vert \eta\Vert_{N^{1,1}(X)}\le C_2(\capa_1(G)+\eps)$,
for some constant $C_2=C_2(C_d,C_P,\lambda)\ge 1$.
\end{lemma}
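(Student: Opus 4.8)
The plan is to build $\eta$ directly from a near-optimal competitor for $\capa_1(G)$, using truncation together with the outer regularity of the capacity to control simultaneously the $N^{1,1}(X)$-norm of $\eta$ and the open set $V$ in which $\eta$ is supported. We may clearly assume $G\neq\emptyset$ and $\capa_1(G)<\infty$. By the definition of $\capa_1$, pick $u\in N^{1,1}(X)$ with $u\ge 1$ on $G$ and $\Vert u\Vert_{N^{1,1}(X)}<\capa_1(G)+\eps$; replacing $u$ by its truncation $\min\{\max\{u,0\},1\}$ does not increase the $N^{1,1}(X)$-norm (truncation is a $1$-Lipschitz operation that only decreases $|u|$ pointwise) and leaves $u=1$ on $G$, so we may assume in addition that $0\le u\le 1$ on $X$.

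The key step is localization. Since $2u\in N^{1,1}(X)$ with $2u\ge 1$ on $\{u\ge 1/2\}$, the definition of the $1$-capacity gives $\capa_1(\{u\ge 1/2\})\le 2\Vert u\Vert_{N^{1,1}(X)}$, and since $\capa_1$ is an outer capacity there is an open set $V\supset\{u\ge 1/2\}$ with $\capa_1(V)<2\Vert u\Vert_{N^{1,1}(X)}+\eps$. As $G\subset\{u=1\}\subset\{u\ge 1/2\}\subset V$, this $V$ is the required open neighbourhood of $G$, with $\capa_1(V)<2(\capa_1(G)+\eps)+\eps$. For the function I would set $\eta:=2(u-\tfrac12)_+$, which equals $\min\{2(u-\tfrac12)_+,1\}$ because $u\le 1$. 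Then $0\le\eta\le 1$ on $X$ and $\eta=1$ on $\{u=1\}\supset G$; moreover $\eta=0$ on $\{u\le 1/2\}$, so $\{\eta\neq 0\}\subset\{u>1/2\}\subset V$ and $\eta$ vanishes on $X\setminus V$. Since $t\mapsto 2(t-\tfrac12)_+$ is $2$-Lipschitz we get $g_\eta\le 2g_u$, and since $\eta\le u$ pointwise (using $0\le u\le 1$) we obtain $\Vert\eta\Vert_{N^{1,1}(X)}\le\Vert u\Vert_{L^1(X)}+2\Vert g_u\Vert_{L^1(X)}\le 2\Vert u\Vert_{N^{1,1}(X)}<2(\capa_1(G)+\eps)$. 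Because $\eta\in N^{1,1}(X)$ vanishes outside the open set $V$, it belongs to $N^{1,1}_0(V)$; and both displayed estimates are at most $3(\capa_1(G)+\eps)$, so $C_2=3$ works.

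I do not expect a genuine obstacle: the proof is short once the localization trick is in place. The one point requiring care is exactly that step — one must trim the ``tail'' of the competitor $u$ so that $\eta$ is supported in an open set of controlled $1$-capacity, and passing to the superlevel set $\{u\ge 1/2\}$, whose capacity is controlled by Chebyshev's inequality and which by outer regularity sits inside a good open set $V$, is what makes this work. (Alternatively one could feed $\{u\ge 1/2\}$, or $G$ itself, into Lemma~\ref{lem:covering G by a set of finite perimeter}, but the argument above is more direct and even yields an absolute constant.)
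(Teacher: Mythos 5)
Your proof is correct, but it follows a genuinely different route from the paper. The paper's argument first invokes Lemma \ref{lem:covering G by a set of finite perimeter} to enclose $G$ in an open set $V_0$ of controlled $1$-capacity \emph{and} perimeter, then applies the boxing inequality \cite[Lemma 4.2]{HaKi} to cover $V_0$ by balls $B(x_i,r_i)$ with $\sum_i \mu(B(x_i,r_i))/r_i$ controlled, and finally builds both $V=\bigcup_i B(x_i,2r_i)$ and $\eta=\sup_i\eta_i$ from explicit Lipschitz bumps; this produces a constant $C_2=C_2(C_d,C_P,\lambda)$ through the boxing constant. You instead take a near-optimal competitor $u$ for $\capa_1(G)$, truncate, pass to the superlevel set $\{u\ge 1/2\}$ via the Chebyshev-type estimate $\capa_1(\{u\ge 1/2\})\le 2\Vert u\Vert_{N^{1,1}(X)}$, use the outer-capacity property of $\capa_1$ (stated in Section \ref{sec:prelis} with reference to \cite[Theorem 5.31]{BB}) to obtain the open set $V$, and define $\eta=2(u-\tfrac12)_+$, whose membership in $N_0^{1,1}(V)$ is immediate from the paper's definition of that space since $\eta$ vanishes off $V$; all the individual steps (truncation, the $2$-Lipschitz post-composition giving $g_\eta\le 2g_u$, $\eta\le u$) check out, and you indeed get the absolute constant $C_2=3$. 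What each approach buys: yours is shorter and gives a dimension-free constant, but it leans on the outer regularity of $\capa_1$, which for $p=1$ is a nontrivial ingredient (it rests on quasicontinuity of Newtonian functions and on the same standing assumptions), whereas the paper's construction is self-contained modulo the boxing inequality and produces $V$ and $\eta$ of a very concrete form (a union of balls and a supremum of Lipschitz bumps). One minor point common to both arguments: the case $\capa_1(G)=\infty$ must be set aside (as you do), since then no admissible $\eta\in N^{1,1}(X)$ with $\eta=1$ on $G$ can exist; in the paper's applications $\capa_1(G)$ is always small, so this is harmless.
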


\begin{proof}
By Lemma \ref{lem:covering G by a set of finite perimeter} we find an open set
$V_0\supset G$ with
\[
\capa_1(V_0)\le C_1(\capa_1(G)+\eps)\ \ \ \textrm{and}\ \ \ P(V_0,X)\le C_1(\capa_1(G)+\eps).
\]
By a suitable \emph{boxing inequality}, see \cite[Lemma 4.2]{HaKi}, we find balls $\{B(x_i,r_i)\}_{i=1}^{\infty}$ with $r_i\le 1$ covering $V_0$, and
\[
\sum_{i=1}^{\infty}\frac{\mu(B(x_i,r_i))}{r_i}\le C_B (\mu(V_0)+P(V_0,X))
\]
for some constant $C_B=C_B(C_d,C_P,\lambda)>0$.
For each $i\in\N$, take a $1/r_i$-Lipschitz function $0\le f_i\le 1$ with $f_i=1$ on $B(x_i,2r_i)$ and
$f_i=0$ on $X\setminus B(x_i,4r_i)$.
Let $f:=\sup_{i\in\N} f_i$. By \eqref{eq:upper gradient in constant set} and the fact that
the local Lipschitz constant is an upper gradient, $\ch_{B(x_i,4r_i)}/r_i$ is a $1$-weak upper
gradient of $f_i$. Hence
the minimal $1$-weak upper gradient of $f$ satisfies $g_f\le\sum_{i=1}^{\infty}\ch_{B(x_i,4r_i)}/r_i$,
see e.g. \cite[Lemma 1.28]{BB}. Then
\begin{align*}
\int_X g_{f}\,d\mu
\le\sum_{i=1}^{\infty}\frac{\mu(B(x_i,4r_i))}{r_i}
&\le C_d^2\sum_{i=1}^{\infty}\frac{\mu(B(x_i,r_i))}{r_i}\\
&\le C_d^2 C_B (\mu(V_0)+P(V_0,X))\\
&\le C_d^2 C_B (\capa_1(V_0)+P(V_0,X))\\
&\le 2C_d^2 C_B C_1(\capa_1(G)+\eps).
\end{align*}
Moreover, since $r_i\le 1$ for each $i\in\N$,
\[
\int_X f\,d\mu\le \sum_{i=1}^{\infty}\int_X f_i\,d\mu
\le \sum_{i=1}^{\infty}\frac{\mu(B(x_i,4r_i))}{r_i}\le 2C_d^2 C_B C_1(\capa_1(G)+\eps).
\]
Let $V:=\bigcup_{i=1}^{\infty}B(x_i,2r_i)$.
Since $f\ge 1$ on $V$, we get the estimate
\[
\capa_1(V)\le \Vert f\Vert_{N^{1,1}(X)}\le 4C_d^2 C_B C_1(\capa_1(G)+\eps).
\]
On the other hand, for each $i\in\N$, we can also take a $1/r_i$-Lipschitz function $0\le \eta_i\le 1$ with $\eta_i=1$ on $B(x_i,r_i)$ and
$\eta_i=0$ on $X\setminus B(x_i,2r_i)$.
Let $\eta:=\sup_{i\in\N} \eta_i$. Then $\eta=1$ on $V_0\supset G$ and $\eta=0$ on $X\setminus V$, and similarly as for
the function $f$, we can estimate $\Vert \eta\Vert_{N^{1,1}(X)}\le 4C_d C_B C_1\capa_1(G)$.
Thus we can choose $C_2=4C_d^2 C_B C_1$.
\end{proof}

The next lemma states that in the definition of the total variation, we can
consider convergence in $L^1(\Om)$
instead of convergence in  $L_{\loc}^1(\Om)$.

\begin{lemma}[{\cite[Lemma 5.5]{KLLS}}]\label{lem:L1 loc and L1 convergence}
Let $\Omega\subset X$ be an open set and let $u\in L^1_{\loc}(\Omega)$
with $\Vert Du\Vert(\Omega)<\infty$. Then there exists a sequence
$(w_i)\subset \liploc(\Omega)$ with $w_i-u\to 0$ in $L^1(\Omega)$ and 
$\int_\Omega g_{w_i}\, d\mu\to \Vert Du\Vert(\Omega)$.
\end{lemma}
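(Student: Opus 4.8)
The plan is to reduce the statement to the following claim: \emph{for every $\eps>0$ there is $w\in\liploc(\Om)$ with $\Vert w-u\Vert_{L^1(\Om)}<\eps$ and $\int_\Om g_w\,d\mu<\Vert Du\Vert(\Om)+\eps$.} Granting this, I apply it with $\eps=1/i$ for each $i\in\N$ to obtain a sequence $(w_i)\subset\liploc(\Om)$ with $w_i-u\to0$ in $L^1(\Om)$ and $\limsup_{i\to\infty}\int_\Om g_{w_i}\,d\mu\le\Vert Du\Vert(\Om)$. Since $L^1(\Om)$-convergence implies convergence in $L^1_{\loc}(\Om)$, the lower semicontinuity \eqref{eq:intro lower semicontinuity} applied on the open set $\Om$ gives
\[
\Vert Du\Vert(\Om)\le\liminf_{i\to\infty}\Vert Dw_i\Vert(\Om)\le\liminf_{i\to\infty}\int_\Om g_{w_i}\,d\mu,
\]
the last inequality by testing the definition of $\Vert Dw_i\Vert(\Om)$ with the constant sequence. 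Together with the $\limsup$ inequality this yields $\int_\Om g_{w_i}\,d\mu\to\Vert Du\Vert(\Om)$, which is the assertion.

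To prove the claim, write $M:=\Vert Du\Vert(\Om)$; recall that $\Vert Du\Vert$ is a finite Radon measure on $\Om$. Fix $\eps>0$ and, using that $\Vert Du\Vert(\Om\setminus\overline{\Om_k})\to0$, choose an exhaustion $\Om_1\Subset\Om_2\Subset\cdots$ of $\Om$ with $\bigcup_k\Om_k=\Om$, $\Om_0:=\emptyset$, and $\Vert Du\Vert(\Om\setminus\overline{\Om_k})<\eps 2^{-k}$ for all $k\ge1$. Put $V_k:=\Om_{k+1}\setminus\overline{\Om_{k-1}}$. Then $\{V_k\}_{k\ge1}$ is a locally finite open cover of $\Om$ by sets with compact closure in $\Om$, and $V_k\cap V_l=\emptyset$ whenever $|k-l|\ge2$; hence every point of $\Om$ lies in exactly one or two of the $V_k$, and the twofold set is $\bigcup_k O_k$ where $O_k:=V_k\cap V_{k+1}=\Om_{k+1}\setminus\overline{\Om_k}$. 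These $O_k$ are pairwise disjoint with $\Vert Du\Vert(O_k)\le\Vert Du\Vert(\Om\setminus\overline{\Om_k})<\eps2^{-k}$, so, using $\sum_k\ch_{V_k}=1+\ch_{\bigcup_k O_k}$ on $\Om$,
\[
\sum_k\Vert Du\Vert(V_k)=\int_\Om\bigl(1+\ch_{\bigcup_k O_k}\bigr)\,d\Vert Du\Vert=M+\Vert Du\Vert\Bigl(\bigcup_k O_k\Bigr)<M+\eps.
\]
Let $\{\phi_k\}$ be a Lipschitz partition of unity subordinate to $\{V_k\}$, with $\phi_k\in\Lip(X)$, $0\le\phi_k\le1$, $\supp\phi_k$ a compact subset of $V_k$, and $\sum_k\phi_k=1$ on $\Om$; set $L_k:=\Vert g_{\phi_k}\Vert_{L^\infty(X)}$. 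For each $k$, since $\Vert Du\Vert(V_k)\le M<\infty$ and $\supp\phi_k$ is a compact subset of $V_k$, the definition of $\Vert Du\Vert(V_k)$ provides $v_k\in\liploc(V_k)$ with $\int_{V_k}g_{v_k}\,d\mu<\Vert Du\Vert(V_k)+\eps2^{-k}$ and with $\Vert v_k-u\Vert_{L^1(\supp\phi_k)}$ arbitrarily small; choose it small enough (the $L_k$ being now fixed) that $\sum_k\Vert v_k-u\Vert_{L^1(\supp\phi_k)}<\eps$ and $\sum_k L_k\bigl(\Vert v_k-u\Vert_{L^1(\supp\phi_k)}+\Vert v_{k+1}-u\Vert_{L^1(\supp\phi_{k+1})}\bigr)<\eps$.

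Set $w:=\sum_k\phi_k v_k$. Near any point of $\Om$ this is a finite sum of functions locally Lipschitz there, so $w\in\liploc(\Om)$; and $w-u=\sum_k\phi_k(v_k-u)$ on $\Om$ gives $\Vert w-u\Vert_{L^1(\Om)}\le\sum_k\Vert v_k-u\Vert_{L^1(\supp\phi_k)}<\eps$. For the energy I argue locally: on the open set where only $\phi_k$ is nonzero we have $\phi_k\equiv1$ and $w=v_k$; on $O_k$ only $\phi_k$ and $\phi_{k+1}$ are nonzero and they sum to $1$, so $w=v_{k+1}+\phi_k(v_k-v_{k+1})$ there, and by the Leibniz rule and subadditivity of upper gradients $g_w\le g_{v_k}+g_{v_{k+1}}+|v_k-v_{k+1}|\,g_{\phi_k}$ on $O_k$, where $g_{\phi_k}$ vanishes (up to a null set) off the set on which $\phi_k$ genuinely varies, so that the last term is supported in $\supp\phi_k\cap\supp\phi_{k+1}$. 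Combining these local estimates and using $\sum_k\ch_{V_k}=1+\ch_{\bigcup_k O_k}$,
\[
\int_\Om g_w\,d\mu\le\sum_k\int_{V_k}g_{v_k}\,d\mu+\sum_k L_k\Vert v_k-v_{k+1}\Vert_{L^1(\supp\phi_k\cap\supp\phi_{k+1})}<M+3\eps,
\]
the first sum being bounded via $\int_{V_k}g_{v_k}\,d\mu<\Vert Du\Vert(V_k)+\eps2^{-k}$ together with $\sum_k\Vert Du\Vert(V_k)<M+\eps$, the second via the choice of the $v_k$. Rescaling $\eps$ proves the claim.

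The step I expect to be the main obstacle is exactly this energy estimate: a naive partition-of-unity gluing double-counts the energy of the patches $v_k$ on the overlaps and only gives the useless bound $2M$ in place of $M+\eps$. What rescues the argument is that $\Vert Du\Vert$ is a finite Radon measure on $\Om$, so the exhaustion can be chosen to make the overlap regions $O_k$ carry summably small total variation; and that the Leibniz error terms are harmless because $v_k$ and $v_{k+1}$ both approximate $u$ in $L^1$ near the overlap, so it is their (small) difference, not either possibly-large function itself, that gets multiplied by $g_{\phi_k}$. The remaining ingredients — existence of the Lipschitz partition of unity with compactly contained supports, the local computation of the minimal $1$-weak upper gradient, and checking that the support of the Leibniz error on $O_k$ has closure compactly contained in both $V_k$ and $V_{k+1}$ — are routine.
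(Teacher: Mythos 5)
The paper does not prove this lemma at all: it is quoted verbatim from \cite[Lemma 5.5]{KLLS}, so there is no internal argument to compare against. Your proof is the standard Meyers--Serrin-type gluing argument (an exhaustion chosen, using that $\Vert Du\Vert$ is a finite Radon measure on $\Om$, so that the overlaps $O_k$ carry summably small variation; a Lipschitz partition of unity; Leibniz error terms involving only the differences $v_k-v_{k+1}$), and the reduction to a single $\eps$-good approximant plus lower semicontinuity on the open set $\Om$ is fine. The bookkeeping identity $\sum_k\ch_{V_k}=1+\ch_{\bigcup_k O_k}$ and the resulting bound $\sum_k\Vert Du\Vert(V_k)<M+\eps$ are correct, as is the localization of the error to $\supp\phi_k\cap\supp\phi_{k+1}$ via the locality of minimal weak upper gradients.

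The one place where your justification is thinner than what you actually use is the overlap estimate $g_w\le g_{v_k}+g_{v_{k+1}}+|v_k-v_{k+1}|\,g_{\phi_k}$ on $O_k$, which you attribute to ``the Leibniz rule and subadditivity''. Applying the product rule literally to $w=v_{k+1}+\phi_k(v_k-v_{k+1})$ gives $g_w\le g_{v_{k+1}}+\phi_k\bigl(g_{v_k}+g_{v_{k+1}}\bigr)+|v_k-v_{k+1}|g_{\phi_k}$, i.e.\ a factor $2$ on $g_{v_{k+1}}$; your own accounting then produces an extra $\int_{O_{k-1}}g_{v_k}\,d\mu$ which is not controlled by anything small (only by $\Vert Du\Vert(V_k)+\eps2^{-k}$), so the crude bound yields roughly $2M$ rather than $M+3\eps$ --- exactly the double counting you warn about. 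The symmetric application $w=\phi_kv_k+\phi_{k+1}v_{k+1}$ instead gives the error $(|v_k|+|v_{k+1}|)g_{\phi_k}$, which is not small either. What you need is the convex-combination estimate $g_{\phi_k v_k+(1-\phi_k)v_{k+1}}\le \phi_k g_{v_k}+(1-\phi_k)g_{v_{k+1}}+|v_k-v_{k+1}|g_{\phi_k}$; this is true and standard, but it should be proved (for instance via the fact that for $1$-a.e.\ curve $\gamma$ the compositions $v_k\circ\gamma$, $v_{k+1}\circ\gamma$, $\phi_k\circ\gamma$ are absolutely continuous with $|(v\circ\gamma)'|\le g_v\circ\gamma$ a.e., so one can differentiate along the curve) or cited as a pasting-type lemma, rather than deduced from the product rule alone. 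Alternatively one can keep the crude bound and additionally arrange, via lower semicontinuity on $V_k$ minus a neighbourhood of the overlaps, that each $v_k$ has small energy on $O_{k-1}\cup O_k$. With that point repaired, your argument is complete and is of the same type as the proof in the cited source.
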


Recall that $g_{w_i}$ denotes the minimal $1$-weak upper gradient of $w_i$ (in $\Om$).
Note that above, we cannot write $w_i\to u$ in $L^1(\Omega)$, since the functions $w_i,u$ are not necessarily
in the class $L^1(\Om)$.

\begin{lemma}[{\cite[Lemma 9.3]{BB-OD}}]\label{lem:quasiopen sets are measurable}
Every $1$-quasiopen set is $\mu$-measurable.
\end{lemma}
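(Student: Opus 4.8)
The plan is to sandwich $U$ between two $\mu$-measurable sets that differ only by a set of $\mu$-measure zero, exploiting the fact that small $1$-capacity forces small measure.

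First I would record the elementary estimate $\mu(G)\le\capa_1(G)$, valid for every $G\subset X$: if $u\in N^{1,1}(X)$ satisfies $u\ge 1$ on $G$, then $\Vert u\Vert_{L^1(X)}\ge\int_G|u|\,d\mu\ge\mu(G)$, so $\Vert u\Vert_{N^{1,1}(X)}\ge\mu(G)$, and taking the infimum over all such $u$ gives the claim. Then, using that $U$ is $1$-quasiopen, for each $n\in\N$ I choose an open set $G_n\subset X$ with $\capa_1(G_n)<1/n$ (hence $\mu(G_n)<1/n$ by the previous line) and such that $U\cup G_n$ is open.

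Next, set $B_n:=U\cup G_n$ and $A_n:=B_n\setminus G_n=U\setminus G_n$. Both are $\mu$-measurable, being differences of open (hence Borel) sets, and one checks directly that $A_n\subset U\subset B_n$ and $B_n\setminus A_n\subset G_n$. Finally put $A:=\bigcup_n A_n$ and $B:=\bigcap_n B_n$, which are again $\mu$-measurable and still satisfy $A\subset U\subset B$. Since $A\supset A_n$ and $B\subset B_n$ for every $n$, we get $B\setminus A\subset B_n\setminus A_n\subset G_n$, so $\mu(B\setminus A)\le\inf_n\mu(G_n)=0$. Consequently $U\setminus A\subset B\setminus A$ has $\mu$-outer measure zero and is therefore $\mu$-measurable (as $\mu$ is an outer measure, sets of outer measure zero are Carathéodory measurable), and hence $U=A\cup(U\setminus A)$ is $\mu$-measurable.

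The only mildly delicate point is the opening step, the inequality $\mu(G)\le\capa_1(G)$ linking capacity to measure; the rest is routine set-theoretic bookkeeping with the $\sigma$-algebra of $\mu$-measurable sets.
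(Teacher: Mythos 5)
Your proof is correct. Note that the paper itself gives no argument for this lemma---it simply cites \cite[Lemma 9.3]{BB-OD} (where the result is proved for all $1\le p<\infty$)---so your contribution is a self-contained elementary proof, and it follows the same standard line as the cited reference: pick open sets $G_n$ with $\capa_1(G_n)<1/n$ such that $U\cup G_n$ is open, use the basic inequality $\mu\le\capa_1$ to see that $U$ is squeezed between the Borel sets $\bigcup_n (U\setminus G_n)$ and $\bigcap_n(U\cup G_n)$ whose difference has outer measure zero, and invoke the Carath\'eodory criterion (null sets of an outer measure are measurable) to conclude. One small point worth tightening: in your opening estimate you write $\int_G|u|\,d\mu\ge\mu(G)$, which integrates over the possibly non-measurable set $G$. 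It is cleaner to observe that $G\subset\{u\ge 1\}$, and the latter set is $\mu$-measurable because $u\in N^{1,1}(X)\subset L^1(X)$ is a $\mu$-measurable function; hence $\mu(G)\le\mu(\{u\ge1\})\le\int_X|u|\,d\mu\le\Vert u\Vert_{N^{1,1}(X)}$, and taking the infimum over admissible $u$ gives $\mu(G)\le\capa_1(G)$. With that adjustment the argument is complete.
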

In fact, this is proved for all $1\le p<\infty$ in the above reference, but we only need the case $p=1$.

The coarea formula \eqref{eq:coarea} states that if $\Omega\subset X$ is an open set and $u\in L^1_{\loc}(\Omega)$, then
\[
\|Du\|(\Omega)=\int_{-\infty}^{\infty}P(\{u>t\},\Omega)\,dt.
\]
If $\Vert Du\Vert(\Omega)<\infty$, the above is true with $\Omega$ replaced by any Borel set $A\subset\Omega$; this is also given in \cite[Proposition 4.2]{M}. However, one can construct simple examples of non-Borel $1$-quasiopen sets, so we need to verify the coarea formula for such sets separately.
In doing this, we use the following lemma,
which states that the total variation of a $\BV$ function is absolutely continuous with respect
to the $1$-capacity.

\begin{lemma}[{\cite[Lemma 3.9]{L2}}]\label{lem:absolute cont of variation measure wrt capacity}
Let $\Omega\subset X$ be an open set, and
let $u\in L^1_{\loc}(\Omega)$ with $\Vert Du\Vert(\Omega)<\infty$. Then for every $\eps>0$ there exists $\delta>0$ such that if $A\subset \Omega$ with
$\capa_1(A)<\delta$, then $\Vert Du\Vert(A)<\eps$.
\end{lemma}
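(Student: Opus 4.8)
The plan is to split the statement into a qualitative core and a soft upgrade. The qualitative core is the assertion that every Borel set $N\subset\Om$ with $\capa_1(N)=0$ satisfies $\Vert Du\Vert(N)=0$; this is the metric analogue of the classical fact that $\Vert Du\Vert$ charges no set of zero $1$-capacity, and I would establish it first. Granting it, the $\eps$--$\delta$ statement follows by a contradiction argument exploiting the outer regularity of $\capa_1$ together with the fact that, under the standing assumption $\Vert Du\Vert(\Om)<\infty$, $\Vert Du\Vert$ is a finite Radon measure on $\Om$ by \cite[Theorem 3.4]{M}.

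For the qualitative core, fix a Borel set $N\subset\Om$ with $\capa_1(N)=0$. Since $\Vert Du\Vert(\Om)<\infty$, the coarea formula \eqref{eq:coarea} is valid with $\Om$ replaced by $N$, so
\[
\Vert Du\Vert(N)=\int_{-\infty}^{\infty}P(\{u>t\},N)\,dt,
\]
and it suffices to show $P(\{u>t\},N)=0$ for a.e. $t$. For such $t$ the superlevel set $E_t:=\{u>t\}$ has finite perimeter in some open neighbourhood of $N$, and I would invoke two standard facts from the metric theory of sets of finite perimeter: first, the perimeter measure $P(E_t,\cdot)$ is carried by the measure theoretic boundary $\partial^*E_t$ and is there dominated by the codimension one Hausdorff measure, so that $P(E_t,N)\le C\,\cH(\partial^*E_t\cap N)\le C\,\cH(N)$ with $C=C(C_d,C_P,\lambda)$; and second, $\capa_1$ and $\cH$ have the same null sets, so $\capa_1(N)=0$ forces $\cH(N)=0$ (the latter implication can be obtained from a boxing inequality, in the spirit of the proof of Lemma \ref{lem:capacity and Newtonian function}). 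Combining these gives $P(E_t,N)=0$ for a.e. $t$, hence $\Vert Du\Vert(N)=0$.

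For the upgrade, suppose the conclusion fails: there are $\eps>0$ and sets $A_i\subset\Om$ with $\capa_1(A_i)<4^{-i}$ and $\Vert Du\Vert(A_i)\ge\eps$ for all $i$. Since $\capa_1$ is an outer capacity, I can choose open sets $G_i\supset A_i$ with $\capa_1(G_i)<4^{-i}$, and after intersecting with $\Om$ I may assume $G_i\subset\Om$; by the definition of $\Vert Du\Vert$ on arbitrary sets, $\Vert Du\Vert(G_i)\ge\Vert Du\Vert(A_i)\ge\eps$. Set $W_k:=\bigcup_{i\ge k}G_i$ and $N:=\bigcap_k W_k\subset\Om$, which is Borel. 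Countable subadditivity gives $\capa_1(N)\le\capa_1(W_k)\le\sum_{i\ge k}4^{-i}\to 0$, so $\capa_1(N)=0$; on the other hand the open sets $W_k$ decrease to $N$, $\Vert Du\Vert(W_k)\ge\Vert Du\Vert(G_k)\ge\eps$, and $\Vert Du\Vert$ is a finite measure on $\Om$, so continuity from above yields $\Vert Du\Vert(N)=\lim_k\Vert Du\Vert(W_k)\ge\eps>0$, contradicting the qualitative core. I expect the main obstacle to lie entirely in the qualitative core — specifically in the comparison of the perimeter measure with the codimension one Hausdorff measure and in the coincidence of the null sets of $\capa_1$ and $\cH$ — while the upgrade is routine provided one keeps all sets Borel (which is why $N$ is taken as a $\limsup$ of open sets) and uses the finiteness of $\Vert Du\Vert$ on $\Om$ to justify continuity from above.
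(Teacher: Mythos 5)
Your proposal is correct, and it is essentially the intended argument: the paper does not prove this lemma internally but imports it from \cite[Lemma 3.9]{L2} with the remark that the classical Euclidean proof carries over, and your reconstruction is exactly that classical route — coarea formula over Borel sets (valid for Borel $A\subset\Om$ once $\Vert Du\Vert(\Om)<\infty$, by \cite[Proposition 4.2]{M}, as the paper itself notes before Proposition \ref{prop:coarea generalization}), reduction to sets of finite perimeter, and a Borel--Cantelli-type upgrade from the null-set statement to the $\eps$--$\delta$ statement using countable subadditivity of $\capa_1$ and continuity from above of the finite Radon measure $\Vert Du\Vert$ on the decreasing open sets $W_k$ (your care in keeping $N$ Borel and intersecting the $G_i$ with $\Om$ is exactly what makes this step work). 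Two small points deserve to be made explicit. First, the inequality $P(E,B)\le C\,\cH(\partial^*E\cap B)$ for Borel $B$ is not a formal consequence of the definitions but a genuine theorem (Ambrosio's comparison of the perimeter measure with the codimension one Hausdorff measure on the measure theoretic boundary in complete doubling spaces supporting a $(1,1)$-Poincar\'e inequality, see also Ambrosio--Miranda--Pallara); it is standard in this literature and is precisely what the constant $C=C(C_d,C_P,\lambda)$ refers to, but it should be cited rather than described as folklore. Second, you need not rederive the implication $\capa_1(N)=0\Rightarrow\cH(N)=0$ from a boxing inequality: this equivalence of null sets is exactly \eqref{eq:null sets of Hausdorff measure and capacity}, quoted in the paper from \cite{HaKi}. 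With those citations supplied, both the qualitative core and the upgrade are sound.
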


\begin{proposition}\label{prop:coarea generalization}
Let $U\subset X$ be a $1$-quasiopen set and suppose that $\Vert Du\Vert(U)<\infty$.
Then
\[
\Vert Du\Vert(U)=\int_{-\infty}^{\infty}P(\{u>t\},U)\,dt.
\]
\end{proposition}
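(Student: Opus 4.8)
The plan is to reduce the statement to the coarea formula for Borel sets recalled above (\cite[Proposition 4.2]{M}) by exhibiting a Borel set $B\subset U$ with $\capa_1(U\setminus B)=0$, and then to transfer the resulting identity from $B$ to $U$ using the absolute continuity of the relevant variation measures with respect to $\capa_1$ (Lemma \ref{lem:absolute cont of variation measure wrt capacity}).

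First I fix an open set $\Om\supset U$ with $u\in L^1_{\loc}(\Om)$ and $\Vert Du\Vert(\Om)<\infty$; such a set exists by the meaning of the assumption $\Vert Du\Vert(U)<\infty$, and then $\Vert Du\Vert$ is a finite Radon measure on $\Om$ by \cite[Theorem 3.4]{M}, so that $\Vert Du\Vert(\cdot)$ is monotone and countably subadditive on subsets of $\Om$ and agrees with this measure on Borel subsets of $\Om$. Next, using quasiopenness, for each $j\in\N$ I pick an open set $G_j\subset X$ with $\capa_1(G_j)<1/j$ and $U\cup G_j$ open, and set $A_j:=(U\cup G_j)\cap(X\setminus G_j)$. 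Since $U\cup G_j$ is open, $A_j$ is a Borel set; moreover $A_j=U\setminus G_j\subset U$ and $U\setminus A_j=U\cap G_j\subset G_j$. Then $B:=\bigcup_{j=1}^{\infty}A_j$ is a Borel set with $B\subset U\subset\Om$, and for every $j$ we have $U\setminus B\subset U\setminus A_j\subset G_j$, hence $\capa_1(U\setminus B)\le\capa_1(G_j)<1/j$; letting $j\to\infty$ gives $\capa_1(U\setminus B)=0$.

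Applying \cite[Proposition 4.2]{M} to the Borel set $B\subset\Om$ gives
\[
\Vert Du\Vert(B)=\int_{-\infty}^{\infty}P(\{u>t\},B)\,dt ,
\]
so it remains to check that both sides are unchanged when $B$ is replaced by $U$. For the left-hand side, monotonicity and subadditivity give $\Vert Du\Vert(B)\le\Vert Du\Vert(U)\le\Vert Du\Vert(B)+\Vert Du\Vert(U\setminus B)$, and $\Vert Du\Vert(U\setminus B)=0$ by Lemma \ref{lem:absolute cont of variation measure wrt capacity} applied to $u$ on $\Om$, since $\capa_1(U\setminus B)=0<\delta$ for every $\delta>0$. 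For the right-hand side, the coarea formula \eqref{eq:coarea} on $\Om$ forces $P(\{u>t\},\Om)<\infty$ for a.e.\ $t\in\R$; for each such $t$ the bounded function $\ch_{\{u>t\}}$ lies in $L^1_{\loc}(\Om)$ with $\Vert D\ch_{\{u>t\}}\Vert(\Om)<\infty$, so Lemma \ref{lem:absolute cont of variation measure wrt capacity} applied to $\ch_{\{u>t\}}$ yields $P(\{u>t\},U\setminus B)=0$ and hence $P(\{u>t\},U)=P(\{u>t\},B)$ for a.e.\ $t$. Combining these observations,
\[
\Vert Du\Vert(U)=\Vert Du\Vert(B)=\int_{-\infty}^{\infty}P(\{u>t\},B)\,dt=\int_{-\infty}^{\infty}P(\{u>t\},U)\,dt .
\]

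I do not expect a genuine obstacle here: once the Borel set $B$ is isolated, the argument is routine. The one point needing care is the bookkeeping around the infimum definition of $\Vert Du\Vert(\cdot)$ on arbitrary subsets versus the Radon measure $\Vert Du\Vert$ on the open set $\Om$ — specifically, making sure that monotonicity and (countable) subadditivity are at our disposal for all the sets occurring above, and that \cite[Proposition 4.2]{M} is stated for exactly this quantity on Borel subsets of $\Om$.
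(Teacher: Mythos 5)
Your proof is correct, but it takes a genuinely different route from the paper. You approximate $U$ from the inside: using the sets $G_j$ from quasiopenness you build a Borel set $B=\bigcup_j\bigl((U\cup G_j)\setminus G_j\bigr)\subset U$ with $\capa_1(U\setminus B)=0$, invoke the Borel-set version of Miranda's coarea formula on $B$, and then use Lemma \ref{lem:absolute cont of variation measure wrt capacity} only to see that the capacity-null set $U\setminus B$ carries no $\Vert Du\Vert$-mass and no $P(\{u>t\},\cdot)$-mass for a.e.\ $t$. The paper instead approximates from the outside: it applies the open-set coarea formula \eqref{eq:coarea} to the open sets $U\cup G_i$ (with $G_{i+1}\subset G_i\subset\Om$), uses Lemma \ref{lem:absolute cont of variation measure wrt capacity} to get $\Vert Du\Vert(U\cup G_i)\to\Vert Du\Vert(U)$ and $P(\{u>t\},U\cup G_i)\to P(\{u>t\},U)$ for a.e.\ $t$, and concludes by dominated convergence with majorant $t\mapsto P(\{u>t\},U\cup G_1)$. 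Your inner-approximation argument dispenses with the limiting/dominated-convergence step, at the price of relying on the Borel-set formulation of \cite[Proposition 4.2]{M} and on the identification, for Borel $B\subset\Om$, of the infimum-over-open-supersets definition of $\Vert Du\Vert(B)$ (and of $P(\{u>t\},B)$) with the value of the finite Radon measure on $\Om$ — the bookkeeping point you flag yourself. That identification is standard (finite Radon measures on metric spaces are outer regular, and after intersecting competitors with $\Om$ the needed monotonicity and subadditivity follow), so there is no gap; the paper's outer approximation simply avoids having to spell this out, working only with open sets where the definition is direct.
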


\begin{proof}
Recall that implicit in the condition $\Vert Du\Vert(U)<\infty$ is the requirement
that there exists an open set
$\Om\supset U$ such that $u\in L^1_{\loc}(\Om)$ and $\Vert Du\Vert(\Om)<\infty$.
Since $U$ is $1$-quasiopen, we can pick open sets $G_i\subset X$ such that
$\capa_1(G_i)\to 0$ and each $U\cup G_i$ is an open set, and we can also assume that
$G_i\subset \Om$ and
$G_{i+1}\subset G_i$
for each $i\in\N$.
Then by the coarea formula \eqref{eq:coarea},
\[
\Vert Du\Vert(U\cup G_i)=\int_{-\infty}^{\infty}P(\{u>t\},U\cup G_i)\,dt.
\]
By Lemma \ref{lem:absolute cont of variation measure wrt capacity},
$\Vert Du\Vert(U\cup G_i)\to \Vert Du\Vert(U)$ as $i\to\infty$. Similarly,
$P(\{u>t\},U\cup G_i)\to P(\{u>t\},U)$ for every $t\in\R$ for which
$P(\{u>t\},U\cup G_1)<\infty$,
that is, for a.e. $t\in\R$. Then by Lebesgue's dominated convergence theorem,
with the majorant function $t\mapsto P(\{u>t\},U\cup G_1)$, we obtain
\[
\Vert Du\Vert(U)=\int_{-\infty}^{\infty}P(\{u>t\},U)\,dt.
\]
\end{proof}

\section{Main results}

In this section we state and prove our main results on the characterization
and lower semicontinuity of the total variation in $1$-quasiopen sets.

The definition of the total variation states that if $\Om\subset X$
is an open set and $u\in L^1_{\loc}(\Om)$, then
\begin{equation}\label{eq:definition of total variation repeated}
\|Du\|(\Om)=\inf\left\{\liminf_{i\to\infty}\int_\Om g_{u_i}\,d\mu:\, u_i\in \Lip_{\loc}(\Om),\, u_i\to u\textrm{ in } L^1_{\loc}(\Om)\right\},
\end{equation}
where each $g_{u_i}$ is the minimal $1$-weak upper gradient of $u_i$ in $\Om$.
Moreover, by \cite[Theorem 5.47]{BB}, for any $v\in N^{1,1}_{\loc}(\Om)$
and $\eps>0$ we can find $w\in \liploc(\Om)$ with $\Vert v-w\Vert_{N^{1,1}(\Om)}<\eps$.
Thus in the above definition we can equivalently assume that
$u_i\in N^{1,1}_{\loc}(\Om)$.

\begin{example}\label{ex:total variation}
Let $X=\R$ (unweighted), let $A:=[0,1]$, and let $u:=\ch_{A}$.
Then by definition
\[
\Vert Du\Vert(A)=\inf\{\Vert Du\Vert(\Om):\ \Om\supset A,\ \Om\textrm{ open}\}=2.
\]
On the other hand, the constant sequence $u_i:=1$ in $A$, $i\in\N$, converges to $u$ in $L^1(A)$ and
has upper gradients $g_{u_i}=0$ in $A$. This demonstrates that we cannot obtain $\Vert Du\Vert(A)$ simply by 
writing \eqref{eq:definition of total variation repeated} with $\Om$ replaced by $A$.
If we did define $\Vert Du\Vert(D)$ in this way for all ($\mu$-measurable) sets $D\subset \R$, then
we would obtain $\Vert Du\Vert(\R)=2$, $\Vert Du\Vert(A)=0$, and $\Vert Du\Vert(\R\setminus A)=0$,
so that $\Vert Du\Vert$ would not be a measure.
\end{example}

However, for $1$-quasiopen sets we have the following.

\begin{theorem}\label{thm:characterization of total variational}
Let $U\subset X$ be a $1$-quasiopen set. If $\Vert Du\Vert(U)<\infty$, then
\[
\Vert Du\Vert(U)=\inf \left\{\liminf_{i\to\infty}\int_{U}g_{u_i}\,d\mu,\,
u_i\in N_{\loc}^{1,1}(U),\, u_i\to u\textrm{ in }L^1_{\loc}(U)\right\},
\]
where each $g_{u_i}$ is the minimal $1$-weak upper gradient of $u_i$ in $U$.
\end{theorem}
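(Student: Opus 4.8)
The plan is to establish the two inequalities separately, writing $\alpha(U)$ for the right-hand side infimum in the statement. The direction $\alpha(U)\le\Vert Du\Vert(U)$ is the easier one: since $U$ is $1$-quasiopen and $\Vert Du\Vert(U)<\infty$, we may (as in the proof of Proposition \ref{prop:coarea generalization}) fix an open set $\Om\supset U$ with $u\in L^1_{\loc}(\Om)$ and $\Vert Du\Vert(\Om)<\infty$, and pick open sets $G_i$ with $\capa_1(G_i)\to 0$, $G_i\subset\Om$, $G_{i+1}\subset G_i$, and each $U\cup G_i$ open. For each fixed $i$, apply the definition \eqref{eq:definition of total variation repeated} (in the $N^{1,1}_{\loc}$ form) on the open set $U\cup G_i$ to obtain functions in $N^{1,1}_{\loc}(U\cup G_i)$, hence in $N^{1,1}_{\loc}(U)$, converging to $u$ in $L^1_{\loc}(U)$ with upper-gradient integrals over $U\cup G_i$ approaching $\Vert Du\Vert(U\cup G_i)$; restricting to $U$ only decreases the integral, so $\alpha(U)\le\Vert Du\Vert(U\cup G_i)$. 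Now let $i\to\infty$ and invoke Lemma \ref{lem:absolute cont of variation measure wrt capacity} (as in Proposition \ref{prop:coarea generalization}) to get $\Vert Du\Vert(U\cup G_i)\to\Vert Du\Vert(U)$, yielding $\alpha(U)\le\Vert Du\Vert(U)$.

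The substantive direction is $\Vert Du\Vert(U)\le\alpha(U)$. Fix any competitor sequence $u_i\in N^{1,1}_{\loc}(U)$ with $u_i\to u$ in $L^1_{\loc}(U)$ and $\liminf_i\int_U g_{u_i}\,d\mu<\infty$; passing to a subsequence, assume the $\liminf$ is a limit. The idea is, for each $i$, to extend $u_i$ from $U$ to a slightly larger open set by gluing in a copy of $u$ (or a Lipschitz approximation of $u$) across a small open set of tiny capacity, using Lemma \ref{lem:capacity and Newtonian function} and the cutoff/Leibniz machinery of Lemma \ref{lem:zero boundary values}. Concretely: given $\eps>0$, choose $j$ large, let $G\subset X$ be open with $\capa_1(G)<\eps$ and $U\cup G$ open, and use Lemma \ref{lem:capacity and Newtonian function} to get an open $V\supset G$ with $\capa_1(V)$ small and a cutoff $\eta\in N^{1,1}_0(V)$, $0\le\eta\le 1$, $\eta=1$ on $G$, with $\Vert\eta\Vert_{N^{1,1}(X)}$ small. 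On the open set $U\cup V$ (which contains the open set $U\cup G$) define $v_i:=(1-\eta)u_i+\eta\,\widetilde u$, where $\widetilde u$ is a suitable $N^{1,1}_{\loc}$ or Lipschitz representative of $u$ on a neighborhood of $\overline{V}$ coming from Lemma \ref{lem:L1 loc and L1 convergence} applied on $\Om$; on $U\setminus V$ we have $v_i=u_i$, and near $\partial U$ inside $V$ the function $v_i$ equals $\widetilde u$, so $v_i$ is genuinely in $N^{1,1}_{\loc}(U\cup V)$. A Leibniz-rule estimate gives $g_{v_i}\le(1-\eta)g_{u_i}+\eta\,g_{\widetilde u}+|u_i-\widetilde u|\,g_\eta$ a.e., so $\int_{U\cup V}g_{v_i}\,d\mu\le\int_U g_{u_i}\,d\mu+\int_V g_{\widetilde u}\,d\mu+\int g_\eta\,|u_i-\widetilde u|\,d\mu$; the middle term is small by absolute continuity of $\Vert Du\Vert$ and the approximation, and the last term is controlled once we truncate to keep $u_i,\widetilde u$ bounded on $V$ (replacing $u$ by $\min\{\max\{u,-M\},M\}$ and using the coarea/truncation stability of the total variation, noting Lemma \ref{lem:zero boundary values} is stated for bounded functions). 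Then $v_i\to u$ in $L^1_{\loc}(U\cup V)$, so by the open-set lower semicontinuity \eqref{eq:intro lower semicontinuity} on $U\cup V$, $\Vert Du\Vert(U\cup G)\le\Vert Du\Vert(U\cup V)\le\liminf_i\int_{U\cup V}g_{v_i}\,d\mu\le\lim_i\int_U g_{u_i}\,d\mu+(\text{small})$. Finally, since $\Vert Du\Vert(U)=\inf_{W\supset U\text{ open}}\Vert Du\Vert(W)\le\Vert Du\Vert(U\cup G)$ and $G$ has arbitrarily small capacity (again using Lemma \ref{lem:absolute cont of variation measure wrt capacity} to pass from $U\cup G$ down to $U$), we conclude $\Vert Du\Vert(U)\le\liminf_i\int_U g_{u_i}\,d\mu$, and taking the infimum over competitors gives $\Vert Du\Vert(U)\le\alpha(U)$.

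I expect the main obstacle to be the gluing step: verifying that the patched function $v_i$ really lies in $N^{1,1}_{\loc}(U\cup V)$ with the claimed upper gradient, and in particular handling the behavior near $\partial U$ where $U$ is only quasiopen (not open), so that $U\cup V$ rather than $U$ must be the ambient open set throughout. The truncation bookkeeping needed to make the term $\int g_\eta|u_i-\widetilde u|\,d\mu$ small — one wants $\Vert g_\eta\Vert_{L^1}$ small and $|u_i-\widetilde u|$ bounded on $\supp\eta$, which forces working with truncated $u$ and then removing the truncation at the end via the coarea formula, here via Proposition \ref{prop:coarea generalization} so that it applies on the quasiopen $U$ — is the other delicate point. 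Everything else is a routine combination of the Leibniz rule, absolute continuity with respect to capacity, and lower semicontinuity on open sets.
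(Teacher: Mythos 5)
Your overall skeleton is the same as the paper's (glue the competitor $u_i$ to an approximation of $u$ across a small-capacity open set via the cutoff of Lemma \ref{lem:capacity and Newtonian function}, estimate on the enlarged open set, use absolute continuity with respect to $\capa_1$, and remove boundedness by truncation and the coarea formula), but two steps that carry the real weight are missing or wrong. First, the gluing step, which you yourself flag as ``the main obstacle,'' is not a verification you can defer: it is exactly where $1$-quasiopenness is used, and your stated justification is incorrect. It is not true that $v_i=\widetilde u$ ``near $\partial U$ inside $V$'' --- the cutoff satisfies $\eta=1$ only on $G$, so on $V\cap U$ away from $G$ the function $v_i$ is a genuine mixture of $u_i$ and $\widetilde u$. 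More fundamentally, the Leibniz rule only produces a $1$-weak upper gradient of the glued function \emph{in $U$} (where $g_{u_i}$ lives); to promote it to a $1$-weak upper gradient in the open set $U\cup G$ one must handle curves that weave between $U$ and $G$. The paper does this by using that a $1$-quasiopen set is $1$-path open, so that for $1$-a.e.\ curve $\gamma$ in $U\cup G$ the preimage $\gamma^{-1}(U)$ is relatively open, and then splitting $[0,\ell_\gamma]$ via the Lebesgue number lemma into subintervals lying entirely in $\gamma^{-1}(U)$ or in $\gamma^{-1}(G)$ (where $\eta=1$ forces $w_i=v_i$) and summing the upper gradient inequalities. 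Without this argument the claim that the patched function lies in $N^{1,1}_{\loc}(U\cup V)$ with the stated upper gradient is unproved, and this is the heart of the theorem.

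Second, your estimate of the ``middle term'' $\int_V g_{\widetilde u}\,d\mu$ does not work as stated. If $\widetilde u$ is a single member of the approximating sequence from Lemma \ref{lem:L1 loc and L1 convergence} on $\Om$, then you only control $\int_\Om g_{\widetilde u}\,d\mu\approx\Vert Du\Vert(\Om)$; there is no localized bound of $\int_V g_{\widetilde u}\,d\mu$ by $\Vert Du\Vert(V)$, since the gradient mass of a fixed approximation can concentrate precisely on $V$, and absolute continuity of $\Vert Du\Vert$ with respect to $\capa_1$ says nothing about $g_{\widetilde u}\,\mu$. The paper avoids this by taking a sequence $(v_i)$ approximating $u$ \emph{in the open set $V$ itself}, so that by the definition of the total variation $\lim_i\int_V g_{v_i}\,d\mu=\Vert Du\Vert(V)<\eps$ once $\capa_1(V)<\delta$. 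Relatedly, $u$ need only be defined on $\Om\supset U$, while $V$ need not be contained in $\Om$, so an approximation of $u$ ``on a neighborhood of $\overline V$'' is not even available from Lemma \ref{lem:L1 loc and L1 convergence}; the paper first reduces to $u\in\BV(X)$ (via an exhaustion $\Om_j\Subset\Om$ and cutoffs, using that $\Om_j\cap U$ is again $1$-quasiopen) before running the gluing argument. Your idea of removing the truncation at the end through Proposition \ref{prop:coarea generalization} is fine in spirit (the paper instead invokes Theorem \ref{thm:lower semic in quasiopen sets}, itself a consequence of the bounded case and the coarea formula), but the two gaps above must be filled for the proof to stand.
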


Note that the condition $u_i\to u$ in $L_{\loc}^1(U)$ means, explicitly, that
for every $x\in U$ there exists $r>0$ such that $u_i\to u$ in $L^1(B(x,r)\cap U)$.
In order for the formulation of the theorem to make sense, we need $U$ to be $\mu$-measurable, which is guaranteed by
Lemma \ref{lem:quasiopen sets are measurable}.

First we prove the following weaker version.

\begin{proposition}\label{prop:characterization of total variational for bounded functions}
Let $U\subset X$ be a $1$-quasiopen set. If $\Om\supset U$ is an open set and
$-1\le u\le 1$ is a $\mu$-measurable function on $\Om$
with $\Vert Du\Vert(\Om)<\infty$, then
\[
\Vert Du\Vert(U)=\inf \left\{\liminf_{i\to\infty}\int_{U}g_{u_i}\,d\mu,\,
u_i\in N_{\loc}^{1,1}(U),\, u_i\to u\textrm{ in }L^1_{\loc}(U)\right\},
\]
where each $g_{u_i}$ is the minimal $1$-weak upper gradient of $u_i$ in $U$.
\end{proposition}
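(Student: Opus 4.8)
I would prove the two inequalities between $\|Du\|(U)$ and the infimum on the right-hand side (call it $R$) separately, the first being easy. Given any open set $\Omega'$ with $U\subset\Omega'\subset\Om$, Lemma \ref{lem:L1 loc and L1 convergence} gives $w_j\in\liploc(\Omega')$ with $w_j-u\to 0$ in $L^1(\Omega')$ and $\int_{\Omega'}g_{w_j}\,d\mu\to\|Du\|(\Omega')$. Restricting to $U$ and using that a $1$-weak upper gradient in $\Omega'$ restricts to one in $U$, the functions $w_j|_U\in N^{1,1}_{\loc}(U)$ converge to $u$ in $L^1_{\loc}(U)$ and satisfy $\liminf_j\int_U g_{w_j|_U}\,d\mu\le\|Du\|(\Omega')$; taking the infimum over $\Omega'$ yields $R\le\|Du\|(U)$. (This direction uses neither $1$-quasiopenness nor the bound on $u$; in particular $R<\infty$, so genuine competitors exist for the reverse inequality.)

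For $\|Du\|(U)\le R$, I would fix a sequence $(u_i)\subset N^{1,1}_{\loc}(U)$ converging to $u$ in $L^1_{\loc}(U)$ with, after passing to a subsequence, $\int_U g_{u_i}\,d\mu\to L$ for some $L$ as close to $R$ as we like. Since truncation to $[-1,1]$ does not increase minimal upper gradients and, as $-1\le u\le1$, does not destroy the $L^1_{\loc}(U)$-convergence, I may assume $-1\le u_i\le1$. Fix $\eps>0$. Using Lemma \ref{lem:absolute cont of variation measure wrt capacity} for the pair $(\Om,u)$, choose $\delta>0$ so that $\|Du\|(A)<\eps$ whenever $A\subset\Om$ has $\capa_1(A)<\delta$; then, by $1$-quasiopenness together with Lemma \ref{lem:capacity and Newtonian function}, choose an open set $G$ with $U\cup G$ open and $G\subset\Om$ (intersecting with $\Om$ preserves openness of $U\cup G$), an open $V\supset G$ with $\capa_1(V)<\delta$, and $\eta\in N^{1,1}_0(V)$ with $0\le\eta\le1$, $\eta=1$ on $G$, and $\|\eta\|_{N^{1,1}(X)}<\eps$. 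Put $W:=U\cup G$, an open subset of $\Om$; then $\|Du\|(V\cap W)\le\|Du\|(V)<\eps$. By Lemma \ref{lem:L1 loc and L1 convergence} on the open set $V\cap W$, take $v_j\in\liploc(V\cap W)$, truncated to $[-1,1]$, with $v_j-u\to0$ in $L^1(V\cap W)$ and $\int_{V\cap W}g_{v_j}\,d\mu<\eps$ for infinitely many $j$. Define on $W$
\[
w_{i,j}:=\eta v_j+(1-\eta)u_i,
\]
which is meaningful because $\eta=0$ on $W\setminus V\subset U$ (so $w_{i,j}=u_i$ there) and $\eta=1$ on $G$ (so $w_{i,j}=v_j$ on $G\setminus U\subset V\cap W$).

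The heart of the argument, and the step I expect to be the main obstacle, is to show that $w_{i,j}\in N^{1,1}_{\loc}(W)$ with minimal upper gradient (in $W$) at most $\eta g_{v_j}\ch_{V\cap W}+(1-\eta)g_{u_i}\ch_U+2g_\eta$; note $g_\eta=0$ on $\{\eta=0\}\cup\{\eta=1\}$ by \eqref{eq:upper gradient in constant set}. On each of the two open sets $V\cap W$ (where the Leibniz rule \cite[Theorem 2.15]{BB} applies, using $|v_j|\le1$ and $|u_i|\le1$ — this is exactly where boundedness of $u$ enters) and $G$ (where $w_{i,j}=v_j$ is locally Lipschitz) this is routine; the difficulty is that these two sets need not cover $W$ and $U$ is not open, so a priori a curve could make $w_{i,j}$ jump when passing between $U$ and $G\setminus U$. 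This is ruled out by the key property of $1$-quasiopen sets: for $1$-almost every curve $\gamma$ in $W$, $\gamma^{-1}(U)$ is a relatively open subset of the parameter interval. (This is known; it can also be seen directly by choosing open $G_k\downarrow$ with $\capa_1(G_k)<4^{-k}$ and $U\cup G_k$ open, Newtonian functions $0\le u_k\le1$ with $u_k\ge1$ on $G_k$ and $\|u_k\|_{N^{1,1}(X)}<4^{-k}$, and observing that along the $1$-a.e. curve where $h:=\sum_k2^k u_k\in N^{1,1}(X)$ is finite and absolutely continuous, $\gamma^{-1}(U)$ cannot fail to be open, since an accumulation of points of $\gamma^{-1}(X\setminus U)$ at a point of $\gamma^{-1}(U)$ would force $h\circ\gamma$ to blow up near that point.) Since $W=U\cup G$, for such $\gamma$ the parameter interval is covered by the open sets $\gamma^{-1}(U)$ and $\gamma^{-1}(G)$, on each of which $w_{i,j}\circ\gamma$ is absolutely continuous; hence it is absolutely continuous on the whole interval, with derivative controlled a.e. by the asserted upper gradient along $\gamma$. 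Integrating that upper gradient over $W$ and using $0\le\eta\le1$ gives
\[
\int_W g_{w_{i,j}}\,d\mu\le\int_U g_{u_i}\,d\mu+\int_{V\cap W}g_{v_j}\,d\mu+2\int_X g_\eta\,d\mu\le\int_U g_{u_i}\,d\mu+3\eps
\]
for the relevant indices $j$.

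Finally, since $|w_{i,j}-u|\le|v_j-u|\ch_{V\cap W}+|u_i-u|\ch_U$ on $W$, a diagonal choice $j=j(i)\to\infty$ yields $w_i':=w_{i,j(i)}\in N^{1,1}_{\loc}(W)$ with $w_i'\to u$ in $L^1_{\loc}(W)$ (using that $X$, being complete and doubling, is proper, hence $W$ is covered by countably many balls compactly contained in it, which legitimizes the diagonalization) and $\liminf_i\int_W g_{w_i'}\,d\mu\le L+3\eps$. By the definition of the total variation on the open set $W$ (for which locally Newtonian approximants are admissible) and monotonicity of $\|Du\|$, this gives $\|Du\|(U)\le\|Du\|(W)\le L+3\eps$; letting $\eps\to0$ and then $L\to R$ gives $\|Du\|(U)\le R$, which together with the first inequality proves the proposition.
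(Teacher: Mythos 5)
Your overall strategy is the paper's: the easy inequality by restriction of Lipschitz approximants, and for the hard inequality the cutoff $\eta$ from Lemma \ref{lem:capacity and Newtonian function}, the glued function $\eta v+(1-\eta)u_i$, the $1$-path openness of $U$ together with a compactness patching along curves on the open set $W:=U\cup G$, and finally Lemma \ref{lem:absolute cont of variation measure wrt capacity} to absorb $\Vert Du\Vert$ near $G$. However, there is a genuine gap exactly at the step you call the heart of the argument, and it is created by your attempt to bypass the paper's reduction to $u\in\BV(X)$. Since $u$ is only defined on $\Om$ and $V$ need not lie in $\Om$, you approximate $u$ only on $V\cap W$, so $\eta v_j$ is merely declared to be $0$ on $W\setminus V\subset U\setminus V$. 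You justify the upper-gradient bound as ``routine'' on the two open sets $V\cap W$ and $G$, and you identify the only remaining danger as a jump when a curve passes between $U$ and $G\setminus U$, which $1$-path openness removes. But your final patching needs $w_{i,j}\circ\gamma$ to be absolutely continuous, with the asserted derivative bound, on all of $\gamma^{-1}(U)$, and a subcurve lying in $U$ can cross from $U\cap V$ into $U\setminus V$, where $v_j$ is undefined; at such a crossing neither of your ``routine'' open regions applies ($U\cap V$ is only quasiopen, and $U\setminus V$ is not covered at all), and the Leibniz rule on $V\cap W$ says nothing about it. Hence the claimed bound $g_{w_{i,j}}\le \eta g_{v_j}\ch_{V\cap W}+(1-\eta)g_{u_i}\ch_U+2g_\eta$ in $U$ (and so in $W$) is not established by what you wrote.

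The paper avoids this issue by first proving the statement for $u\in\BV(X)$ with $-1\le u\le 1$: then the approximants $v_i$ can be taken on all of $V$, Lemma \ref{lem:zero boundary values} gives $\eta v_i\in N^{1,1}_0(V)\subset N^{1,1}(X)$, and consequently the pair $(w_i,(1-\eta)g_{u_i}+\eta g_{v_i}+2g_\eta)$ satisfies the upper gradient inequality along $1$-a.e.\ subcurve in $U$ with no $\partial V$-crossing problem; only afterwards does the $U$-versus-$G$ patching enter, and the general bounded case is recovered by exhausting $\Om$ with $\Om_j\Subset\Om$ and cutoffs, using $a(u\eta_j,\Om_j\cap U)=a(u,\Om_j\cap U)$. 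To repair your version you must either perform this reduction, or supply the missing lemma in your setting: show that the zero extension of $\eta v_j$ from $V\cap W$ to $W$ belongs to $N^{1,1}_{\loc}(W)$ with $1$-weak upper gradient $\eta g_{v_j}+|v_j|g_\eta$ extended by zero (for instance by a first-exit/last-entry argument along $1$-a.e.\ curve, using $|\eta v_j|\le\eta$, $\eta=0$ on $X\setminus V$, and the absolute continuity of $\eta\circ\gamma$ and $u_i\circ\gamma$), i.e.\ essentially by reproving Lemma \ref{lem:zero boundary values} in a local form. With that ingredient added, the remainder of your argument (the pointwise bound $|w_{i,j}-u|\le |v_j-u|\ch_{V\cap W}+|u_i-u|\ch_U$, the diagonal choice, and $\Vert Du\Vert(U)\le\Vert Du\Vert(W)\le L+3\eps$) does go through, and you would obtain a variant that dispenses with the exhaustion step. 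One small further correction: $\Vert Du\Vert(V)$ is not defined when $V\not\subset\Om$, so you should apply Lemma \ref{lem:absolute cont of variation measure wrt capacity} directly to $A=V\cap W\subset\Om$ rather than writing $\Vert Du\Vert(V\cap W)\le\Vert Du\Vert(V)$.
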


\begin{proof}
Denote the infimum in the statement of the theorem by $a(u,U)$.
Clearly $a(u,U)\le \Vert Du\Vert(U)$, so we only need to prove that $\Vert Du\Vert(U)\le a(u,U)$.
We can assume that $a(u,U)<\infty$.
First assume also that $u\in\BV(X)$ with $-1\le u\le 1$. Fix $\eps>0$.
By Lemma \ref{lem:absolute cont of variation measure wrt capacity}, there exists $\delta\in (0,\eps)$
such that if $A\subset X$ with $\capa_1(A)<\delta$, then $\Vert Du\Vert(A)<\eps$.
Take a sequence $(u_i)\subset N_{\loc}^{1,1}(U)$ with $u_i\to u$ in $L^1_{\loc}(U)$ and
\[
\liminf_{i\to\infty}\int_U g_{u_i}\,d\mu\le a(u,U)+\eps.
\]
By truncating, we can also assume that $-1\le u_i\le 1$.
Then take an open set $G\subset X$ such that $\capa_1(G)<\delta/C_2$ and $U\cup G$ is open.
By Lemma \ref{lem:capacity and Newtonian function} we find
an open set $V\supset G$
with $\capa_1(V)<\delta$ and
a function $\eta\in N_0^{1,1}(V)$ with $0\le\eta\le 1$, $\eta=1$ on $G$, and
$\Vert \eta\Vert_{N^{1,1}(X)}<\delta$.

By the definition of the total variation, we find a
sequence $(v_i)\subset N_{\loc}^{1,1}(V)$ with $v_i\to u$ in $L^1_{\loc}(V)$ and
\[
\Vert Du\Vert(V)=\lim_{i\to\infty}\int_{V} g_{v_i}\,d\mu.
\]
We can again assume that $-1\le v_i\le 1$, and then in fact $v_i\to u$ in $L^1(V)$.
Define
\[
w_i:=(1-\eta) u_i+\eta v_i,\quad i\in\N.
\]
By the Leibniz rule, see \cite[Theorem 2.15]{BB}, $(1-\eta)u_i$ has a $1$-weak upper gradient
\[
(1-\eta)g_{u_i}+|u_i|g_{\eta}
\]
in $U$.
By Lemma \ref{lem:zero boundary values},
\[
\eta v_i\in N_0^{1,1}(V)\subset N^{1,1}(X)\subset N^{1,1}(U)
\]
with a $1$-weak upper gradient $\eta g_{v_i}+|v_i|g_{\eta}$ (in $X$, and thus in $U$).
In total, $w_i$ has a $1$-weak upper gradient
\[
g_i:=(1-\eta)g_{u_i}+\eta g_{v_i}+2g_{\eta}
\]
in $U$. Next we show that in fact, $g_i$ is a $1$-weak upper gradient of $w_i$ in $U\cup G$;
note that
while $g_{u_i}$ is only defined on $U$, $(1-\eta)g_{u_i}$ is defined in a natural way on $U\cup G$,
and similarly for the term $\eta g_{v_i}$.

Since $U$ is a $1$-quasiopen set, it is also \emph{$1$-path open},
meaning that for $1$-a.e. curve $\gamma$,
the set $\gamma^{-1}(U)$ is a relatively open subset of $[0,\ell_{\gamma}]$, see \cite[Remark 3.5]{S2}.
Fix such a curve $\gamma$ in $U\cup G$, and assume also that the upper gradient inequality holds for the
pair $(w_i,g_i)$ on any subcurve of $\gamma$ in $U$, and for the
pair $(v_i,g_{v_i})$ on any subcurve of $\gamma$ in $G$; by \cite[Lemma 1.34(c)]{BB} this is true for $1$-a.e. curve.

Now $[0,\ell_{\gamma}]$ is a compact set that is covered by the two relatively
open sets $\gamma^{-1}(U)$ and $\gamma^{-1}(G)$.
By the Lebesgue number lemma,
there exists a number $\beta>0$ such that every subinterval of $[0,\ell_{\gamma}]$ with
length at most $\beta$ is contained either in $\gamma^{-1}(U)$ or in
$\gamma^{-1}(G)$.
Choose $m\in\N$ such that $\ell_{\gamma}/m\le \delta$ and consider the
subintervals $I_j:=[j\ell_{\gamma}/m,(j+1)\ell_{\gamma}/m]$, $j=0,\ldots,m-1$.
If $I_j\subset \gamma^{-1}(U)$, then by our assumptions on $\gamma$,
\[
|w_i(j\ell_{\gamma}/m)-w_i((j+1)\ell_{\gamma}/m)|\le \int_{j\ell_{\gamma}/m}^{(j+1)\ell_{\gamma}/m}g_i(\gamma(s))\,ds.
\]
Otherwise $I_j\subset \gamma^{-1}(G)$. Recall that
$\eta=1$ on $G$. Then by our assumptions on $\gamma$,
\begin{align*}
|w_i(j\ell_{\gamma}/m)-w_i((j+1)\ell_{\gamma}/m)|
&=|v_i(j\ell_{\gamma}/m)-v_i((j+1)\ell_{\gamma}/m)|\\
&\le \int_{j\ell_{\gamma}/m}^{(j+1)\ell_{\gamma}/m}g_{v_i}(\gamma(s))\,ds\\
&=\int_{j\ell_{\gamma}/m}^{(j+1)\ell_{\gamma}/m}g_i(\gamma(s))\,ds.
\end{align*}
Adding up the inequalities for $j=1,\ldots,m-1$, we conclude that the
upper gradient inequality holds for the
pair $(w_i,g_i)$ on the curve $\gamma$, that is,
\[
|w_i(0)-w_i(\ell_{\gamma})|\le \int_0^{\ell_{\gamma}}g_i(\gamma(s))\,ds.
\]
Thus
$g_i$ is a $1$-weak upper gradient of
$w_i$ in the open set $U\cup G$.
Next we show that $w_i\to u$ in $L_{\loc}^1(U\cup G)$.
Let $x\in U$. Since $u_i\to u$ in $L^1_{\loc}(U)$, there is some $r>0$ such that $u_i\to u$
in $L^1(B(x,r)\cap U)$. Moreover, $v_i\to u$ in $L^1(V)$, and by making $r$ smaller, if necessary,
$B(x,r)\subset U\cup G$.
Then
\begin{align*}
\int_{B(x,r)}|w_i-u|\,d\mu
&\le\int_{B(x,r)}|(1-\eta)(u_i-u)|\,d\mu +
\int_{B(x,r)}|\eta (v_i-u)|\,d\mu\\
&\le \int_{B(x,r)\cap U\setminus G}|u_i-u|\,d\mu+
\int_{B(x,r)\cap V}|v_i-u|\,d\mu\\
&\to 0\qquad \textrm{as }i\to\infty.
\end{align*}
On the other hand, if $x\in G$, then for some $r>0$, $B(x,r)\subset G$. Then
\[
\int_{B(x,r)}|w_i-u|\,d\mu= \int_{B(x,r)}|v_i-u|\,d\mu\to 0.
\]
We conclude that $w_i\to u$ in $L_{\loc}^1(U\cup G)$. Now by the definition of the total variation
(recall \eqref{eq:definition of total variation repeated} and the discussion after it)
\begin{align*}
\Vert Du\Vert(U\cup G)
&\le \liminf_{i\to\infty}\int_{U\cup G}g_i\,d\mu\\
&\le \liminf_{i\to\infty}\int_{U}g_{u_i}\,d\mu+\limsup_{i\to\infty}\int_{V}g_{v_i}\,d\mu+2\limsup_{i\to\infty}\int_X g_{\eta}\,d\mu\\
&\le a(u,U)+\eps+\Vert Du\Vert(V)+2\int_X g_{\eta}\,d\mu\\
&< a(u,U)+4\eps;
\end{align*}
recall that $\Vert Du\Vert(V)<\eps$ since $\capa_1(V)<\delta$, and that
$\Vert \eta\Vert_{N^{1,1}(X)}<\delta<\eps$.
In conclusion,
\[
\Vert Du\Vert(U)\le \Vert Du\Vert(U\cup G)\le a(u,U)+4\eps.
\]
Letting $\eps\to 0$,
the proof is complete in the case $u\in\BV(X)$, $-1\le u\le 1$.

Now we drop the assumption $u\in\BV(X)$. By assumption, we have $-1\le u\le 1$ on the open set $\Om\supset U$,
with $\Vert Du\Vert(\Om)<\infty$.
Take open sets $\Omega_1\Subset \Omega_2\Subset\ldots \Subset \Omega$ with $\Omega=\bigcup_{j=1}^{\infty}\Omega_j$,
and cutoff functions $\eta_j\in \Lip_c(X)$ with $0\le \eta_j\le 1$, $\eta_j=1$ on $\Omega_j$, and $\eta_j=0$ on $X\setminus \Omega_{j+1}$.
Fix $j\in\N$.
It is easy to check that $u \eta_j\in\BV(X)$ for each $j\in\N$.
Since $\Omega_j\cap U$ is a $1$-quasiopen set\footnote{Quasiopen sets do not form a topology, see \cite[Remark 9.1]{BB-OD}, but it is easy to see that the intersection of a $1$-quasiopen set and an open set is $1$-quasiopen.},
we get by the first part of the proof that
\begin{align*}
\Vert Du\Vert(\Om_j\cap U)
&=\Vert D(u\eta_j)\Vert(\Om_j\cap U)\\
&= a(u\eta_j,\Om_j\cap U)
= a(u,\Om_j\cap U)\le a(u,U).
\end{align*}
Letting $j\to\infty$ concludes the proof.
\end{proof}

Before proving Theorem \ref{thm:characterization of total variational},
we prove our second main result, which states that the total variation of $\BV$ functions is lower semicontinuous with
respect to $L^1$-convergence in $1$-quasiopen sets.
In fact, we will use this to prove Theorem \ref{thm:characterization of total variational}.

\begin{theorem}\label{thm:lower semic in quasiopen sets}
Let $U\subset X$ be a $1$-quasiopen set.
If $\Vert Du\Vert(U)<\infty$ and $u_i\to u$ in $L^1_{\loc}(U)$, then
\[
\Vert Du\Vert(U)\le \liminf_{i\to\infty}\Vert Du_i\Vert(U).
\]
\end{theorem}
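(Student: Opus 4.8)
The plan is to reduce the lower semicontinuity in the quasiopen set $U$ to the known lower semicontinuity in open sets by exhausting $U$ from inside by open sets, up to a small capacity error, and then invoking the characterization Proposition~\ref{prop:characterization of total variational for bounded functions} (and ultimately Theorem~\ref{thm:characterization of total variational}, though for the proof we only need the bounded-function version). First I would dispose of trivialities: we may assume $\liminf_{i\to\infty}\Vert Du_i\Vert(U)<\infty$, pass to a subsequence realizing the liminf so that $\sup_i\Vert Du_i\Vert(U)=:M<\infty$, and (since $\Vert Du\Vert(U)<\infty$ carries with it an open set $\Om\supset U$ with $u\in L^1_{\loc}(\Om)$ and $\Vert Du\Vert(\Om)<\infty$) we may also assume each $u_i\in L^1_{\loc}(\Om_i)$ for some open $\Om_i\supset U$. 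By a standard truncation argument, combined with the coarea formula of Proposition~\ref{prop:coarea generalization} applied to $U$ (which handles the possibly non-Borel quasiopen set), it suffices to treat the case $-1\le u\le 1$ and $-1\le u_i\le 1$: truncating at levels $\pm T$ does not increase the total variation, and letting $T\to\infty$ recovers the general case via dominated convergence in the coarea integral.

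Next, fix $\eps>0$. Using that $U$ is $1$-quasiopen, choose an open set $G$ with $\capa_1(G)$ as small as we like and $U\cup G$ open; by Lemma~\ref{lem:absolute cont of variation measure wrt capacity} applied on $\Om$ we can arrange $\Vert Du\Vert(G\cap\Om)<\eps$ and, more importantly, we want to control the $u_i$ on $G$. The key point is the characterization: by Proposition~\ref{prop:characterization of total variational for bounded functions} (applicable since $-1\le u_i\le 1$ and $\Vert Du_i\Vert(\Om_i)<\infty$), for each $i$ we can find $w_{i}\in N^{1,1}_{\loc}(U)$ with $-1\le w_i\le 1$, $w_i$ close to $u_i$ in $L^1_{\loc}(U)$, and $\int_U g_{w_i}\,d\mu$ close to $\Vert Du_i\Vert(U)$. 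Then one repeats the gluing construction from the proof of Proposition~\ref{prop:characterization of total variational for bounded functions}: set $W_i:=(1-\eta)w_i+\eta v_i$, where $\eta\in N^{1,1}_0(V)$ with $\eta=1$ on $G$ comes from Lemma~\ref{lem:capacity and Newtonian function} and the $v_i$ approximate $u_i$ on a neighborhood $V\supset G$; using $1$-path-openness of $U$ and the Lebesgue number lemma exactly as before, $g_i:=(1-\eta)g_{w_i}+\eta g_{v_i}+2g_\eta$ is a $1$-weak upper gradient of $W_i$ in the \emph{open} set $U\cup G$, and $W_i\to$ (the corresponding limit) in $L^1_{\loc}(U\cup G)$. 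Applying the honest open-set lower semicontinuity \eqref{eq:intro lower semicontinuity} on $U\cup G$ then gives $\Vert Du\Vert(U)\le\Vert Du\Vert(U\cup G)\le\liminf_i\int_{U\cup G}g_i\,d\mu\le\liminf_i\Vert Du_i\Vert(U)+C\eps$, and letting $\eps\to0$ finishes the argument.

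Alternatively — and this is likely cleaner to write — one can avoid re-running the gluing by observing that the whole content of Proposition~\ref{prop:characterization of total variational for bounded functions} is precisely that $\Vert Du\Vert(U)=a(u,U)$, the latter being manifestly an infimum over approximating sequences \emph{intrinsic to $U$}. So: given $u_i\to u$ in $L^1_{\loc}(U)$ with $\sup_i\Vert Du_i\Vert(U)<\infty$ (and all functions bounded by $1$), use the characterization to pick $w_i\in N^{1,1}_{\loc}(U)$ with $\|w_i-u_i\|_{L^1(B\cap U)}\to0$ suitably and $\int_U g_{w_i}\,d\mu\le\Vert Du_i\Vert(U)+1/i$; a diagonal argument produces from the double array $(w_i)$ a single sequence in $N^{1,1}_{\loc}(U)$ converging to $u$ in $L^1_{\loc}(U)$ with $\liminf$ of gradient integrals $\le\liminf_i\Vert Du_i\Vert(U)$, whence $\Vert Du\Vert(U)=a(u,U)\le\liminf_i\Vert Du_i\Vert(U)$. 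The truncation reduction at the start removes the boundedness hypothesis.

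The main obstacle is the same one that the paper has been building toward: making sense of ``$\Vert Du_i\Vert(U)$'' and its approximating sequences when $U$ is only quasiopen and possibly non-Borel — this is exactly why Proposition~\ref{prop:coarea generalization} and Proposition~\ref{prop:characterization of total variational for bounded functions} are needed as inputs. A secondary technical point is the truncation step: one must verify that $\Vert D(\max\{-T,\min\{T,u_i\}\})\Vert(U)\le\Vert Du_i\Vert(U)$ for quasiopen $U$, which follows from the coarea formula of Proposition~\ref{prop:coarea generalization} together with the fact that the super-level sets of the truncation are super-level sets of $u_i$ for $|t|<T$ and empty or all of $U$ otherwise; and that the truncated functions still converge in $L^1_{\loc}(U)$, which is immediate since truncation is $1$-Lipschitz. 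Once these are in place, the diagonal/gluing argument is routine.
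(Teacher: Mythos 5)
The decisive step in both of your variants is the same, and it is exactly where the argument breaks down: for each $i$ you take a near-optimal approximating sequence for $a(u_i,U)$ from Proposition \ref{prop:characterization of total variational for bounded functions} and then assert that a routine diagonal argument extracts a single sequence $w_i\in N^{1,1}_{\loc}(U)$ with $w_i\to u$ in $L^1_{\loc}(U)$ and $\liminf_i\int_U g_{w_i}\,d\mu\le\liminf_i\Vert Du_i\Vert(U)$. But on a merely $1$-quasiopen set $U$, convergence in $L^1_{\loc}(U)$ means: for every $x\in U$ there exists \emph{some} $r>0$ such that convergence holds in $L^1(B(x,r)\cap U)$. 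This is not convergence with respect to a fixed countable family of seminorms: since $U$ need not be open, the sets $B(x,r)\cap U$ are in general not exhausted by compact subsets of $U$, so they cannot be replaced by a fixed exhaustion, and the admissible radii are part of the data of each particular sequence. Concretely, the sequence $(w_{i,j})_j$ realizing $a(u_i,U)$ converges to $u_i$ only on balls $B(x,r_i(x))\cap U$ whose radii you do not control and which may shrink to $0$ as $i\to\infty$; when $r>r_i(x)$ the quantity $\Vert w_{i,j}-u_i\Vert_{L^1(B(x,r)\cap U)}$ is simply not controlled, so no choice of $j(i)$ yields convergence of $w_{i,j(i)}$ to $u$ on a fixed ball around $x$. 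The same gap is hidden in your first variant in the phrase ``$w_i$ close to $u_i$ in $L^1_{\loc}(U)$'', which is not a quantitative statement; what the gluing actually needs is $w_i\to u$ in $L^1_{\loc}(U)$. (In that variant there is a second slip: the inner functions $v_i$ must approximate the limit $u$ on $V$, not $u_i$, since otherwise the glued sequence converges on $G$ to the wrong function, and $\Vert Du\Vert(U)$ is defined through open neighborhoods of $U$, hence is not determined by $u|_U$ alone.)

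This selection problem is precisely what the paper's proof is built to avoid: there, each $u_i$ (in fact each level set $\ch_{\{u_i>t\}}$) is approximated not intrinsically in $U$ but in an open set $\Om_i\supset U$ chosen with $P(E_i,\Om_i)<P(E_i,U)+1/i$, using Lemma \ref{lem:L1 loc and L1 convergence}; this gives $v_i$ with $\Vert v_i-\ch_{E_i}\Vert_{L^1(\Om_i)}<1/i$, i.e.\ \emph{global} $L^1$-closeness on a set containing $U$, whence $v_i\to\ch_E$ in $L^1_{\loc}(U)$ with no diagonal difficulty, and Proposition \ref{prop:characterization of total variational for bounded functions} is applied only to the limit. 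The general case is then recovered through Proposition \ref{prop:coarea generalization} and Fatou, the only diagonal argument being over the single index $i$ with balls fixed by the single convergence $u_i\to u$, which is harmless. Your outline can be repaired in the same spirit: keep your truncation step (which is fine, via Proposition \ref{prop:coarea generalization}), but replace the appeal to $a(u_i,U)$ by Lemma \ref{lem:L1 loc and L1 convergence} on open sets $\Om_i\supset U$ with $\Vert Du_i\Vert(\Om_i)<\Vert Du_i\Vert(U)+1/i$, truncate the resulting $v_i$ to $[-1,1]$, and apply Proposition \ref{prop:characterization of total variational for bounded functions} to $u$ alone. As written, however, the diagonal extraction is a genuine gap, not a routine verification.
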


\begin{proof}
First assume that $E,E_i\subset X$, $i\in\N$, are $\mu$-measurable sets with
$P(E,U),P(E_i,U)<\infty$ and
$\ch_{E_i}\to \ch_E$ in $L_{\loc}^1(U)$.
For each $i\in\N$, the condition $P(E_i,U)<\infty$ means that we find an open set $\Om_i\supset U$ such that
$P(E_i,\Om_i)<P(E_i,U)+1/i<\infty$.
Then by Lemma \ref{lem:L1 loc and L1 convergence}, for each $i\in\N$ we find a function
$v_i\in \liploc(\Om_i)\subset N_{\loc}^{1,1}(\Om_i)$ such that
\[
\Vert v_i-\ch_{E_i}\Vert_{L^1(\Om_i)}<1/i\quad\textrm{and}\quad\int_{\Om_i}
g_{v_i}\,d\mu<P(E_i,\Om_i)+1/i,
\]
where $g_{v_i}$ is the minimal $1$-weak upper gradient of $v_i$ in $\Om_i$.
In particular, we have $v_i\in N_{\loc}^{1,1}(U)$ with
\[
\Vert v_i-\ch_{E_i}\Vert_{L^1(U)}<1/i\quad\textrm{and}\quad\int_{U} g_{v_i}\,d\mu<P(E_i,U)+2/i,
\]
where $g_{v_i}$ is now the minimal $1$-weak upper gradient of $v_i$ in $U$, which
is of course at most the minimal $1$-weak upper gradient of $v_i$ in $\Om_i$.
Now we clearly have $v_i\to \ch_E$ in $L^1_{\loc}(U)$. Moreover, the condition $P(E,U)<\infty$
means that there exists an open set $\Om\supset U$ such that $P(E,\Om)<\infty$. Thus by Proposition
\ref{prop:characterization of total variational for bounded functions},
\[
P(E,U)\le \liminf_{i\to\infty}\int_U g_{v_i}\,d\mu\le\liminf_{i\to\infty}( P(E_i,U)+2/i)
=\liminf_{i\to\infty} P(E_i,U).
\]
Thus we have proved lower semicontinuity in the case of sets of finite perimeter.
Then consider the function $u$.
Note that it is enough to prove the lower semicontinuity for a subsequence.
We have $u_i\to u$ in $L_{\loc}^1(U)$, which means that for every $x\in U$ there
exists $r_x>0$ such that $u_i\to u$ in $L^1(B(x,r_x)\cap U)$. Consider the cover
$\{B(x,r_x)\}_{x\in U}$.
We know that the space $X$ is
separable, see e.g. \cite[Proposition 1.6]{BB},
and this property is inherited by subsets of $X$. Thus $U$ is separable,
and so it is also Lindel\"of, meaning that every open cover of $U$ has a countable subcover,
see \cite[pp. 176--177]{Kur}.
Thus there exists a countable subcover $\{B(x_j,r_j)\}_{j\in\N}$ of $U$.

Consider the ball $B(x_1,r_1)$.
We have $u_i\to u$ in $L^1(B(x_1,r_1)\cap U)$, and so
by passing to a subsequence (not relabeled), for a.e. $t\in\R$ we have $\ch_{\{u_i>t\}}\to \ch_{\{u>t\}}$ in $L^1(B(x_1,r_1)\cap U)$, see e.g. \cite[p. 188]{EvaG92}.
By a diagonal argument, we find a subsequence (not relabeled) such that for each $j\in\N$ and
a.e. $t\in\R$ we have $\ch_{\{u_i>t\}}\to \ch_{\{u>t\}}$ in $L^1(B(x_j,r_j)\cap U)$.
Since the balls $B(x_j,r_j)$ cover $U$, we conclude that for a.e. $t\in\R$,
$\ch_{\{u_i>t\}}\to \ch_{\{u>t\}}$ in $L_{\loc}^1(U)$.

We can assume that $\Vert Du_i\Vert(U)<\infty$ for all $i\in\N$, and so by
Proposition \ref{prop:coarea generalization},
$\int_{-\infty}^{\infty}P(\{u_i>t\},U)\,dt<\infty$ for all $i\in\N$,
and in particular,
for each $i\in\N$ the mapping $t\mapsto P(\{u_i>t\},U)$ is measurable,
enabling us to use Fatou's lemma.
Moreover, we are able to use the lower semicontinuity for sets of finite perimeter proved above,
because for a.e. $t\in\R$, $P(\{u>t\},U)<\infty$ and $P(\{u_i>t\},U)<\infty$ for all $i\in\N$.
Indeed, now we use Proposition \ref{prop:coarea generalization},
the lower semicontinuity for sets of finite perimeter proved above, and Fatou's lemma to obtain
\begin{align*}
\Vert Du\Vert(U)
=\int_{-\infty}^{\infty}P(\{u>t\},U)\,dt
&\le\int_{-\infty}^{\infty}\liminf_{i\to\infty}P(\{u_i>t\},U)\,dt\\
&\le\liminf_{i\to\infty}\int_{-\infty}^{\infty}P(\{u_i>t\},U)\,dt\\
&=\liminf_{i\to\infty}\Vert Du_i\Vert(U).
\end{align*}
\end{proof}

Knowing that the total variation is lower semicontinuous in a wider class of sets than just the open sets should prove useful in dealing with various minimization problems. In the upcoming work \cite{LMS} we need lower semicontinuity of the total variation in the super-level sets of a given Newton-Sobolev function $w\in N^{1,1}(X)$. Such sets are $1$-quasiopen since functions in the class $N^{1,1}(X)$ are $1$-quasicontinuous; see \cite{BBM-QP} for more on these concepts.

Finally, we give the proof of Theorem \ref{thm:characterization of total variational}.

\begin{proof}[Proof of Theorem \ref{thm:characterization of total variational}]
Suppose that $\Vert Du\Vert(U)<\infty$.
First suppose also that there exists $M>0$ and an open set $\Om\supset U$ such
that $-M\le u\le M$ on $\Om$,
and $\Vert Du\Vert(\Om)<\infty$. Again, denote by $a(u,U)$ the infimum in the statement of the theorem.
It is obvious that a function $g$ is a $1$-weak upper gradient of a function $v$
if and only if $g/M$ is a $1$-weak upper gradient of $v/M$.
Using this fact and Proposition \ref{prop:characterization of total variational for bounded functions}, we obtain
\[
\Vert Du\Vert(U)/M=\Vert D(u/M)\Vert(U)= a(u/M,U)=a(u,U)/M,
\]
so that
\[
\Vert Du\Vert(U)= a(u,U).
\]
Then suppose we only have $\Vert Du\Vert(U)<\infty$.
This means that there exists an open set $\Om\supset U$ such that $u\in L^1_{\loc}(\Om)$ and
$\Vert Du\Vert(\Om)<\infty$.
Define the truncations
\[
u_M:=\min\{M,\max\{-M,u\}\},\quad M>0,
\]
and apply Theorem \ref{thm:lower semic in quasiopen sets} and the
first part of the current proof to obtain
\begin{align*}
\Vert Du\Vert(U)
&\le \liminf_{M\to\infty}\Vert Du_M\Vert(U)\\
&= \liminf_{M\to\infty}a(u_M,U)
\le\liminf_{M\to\infty}a(u,U)=a(u,U).
\end{align*}
\end{proof}

Contrary to the case of open sets, lower semicontinuity can actually be violated in $1$-quasiopen sets if the limit function is not a $\BV$ function. Thus the requirement
$\Vert Du\Vert(U)<\infty$ in Theorem \ref{thm:lower semic in quasiopen sets} is essential.

\begin{example}\label{ex:necessity of finite variation}
Let $X=\R^2$ (unweighted). Denote the origin by $0$, and let $U:=\{0\}$.
The set $B(0,r)$ is open for all $r>0$,
and it is easy to check that $\capa_1(B(0,r))\le 3\pi r$ for $0<r\le 1$.
Thus $U$ is a $1$-quasiopen set. Let
\[
E:=\bigcup_{i=1}^{\infty}\{(x_1,x_2)\in\R^2:\,(2i+1)^{-1}< x_1< (2i)^{-1}\}.
\]
It is well known that $P(E,B(0,r))=\mathcal H^1(\partial^*E\cap B(0,r))$, where $\mathcal H^1$
is the $1$-dimensional Hausdorff measure, see e.g. \cite[Theorem 3.59]{AFP}. 
Thus clearly $P(E,B(0,r))=\infty$ for all $r>0$, and so
$P(E,U)=\infty$.
Next, let
\[
E_k:=\bigcup_{i=1}^k\{(x_1,x_2)\in\R^2:\,(2i+1)^{-1}< x_1< (2i)^{-1}\},\quad k\in\N.
\]
Then $\ch_{E_k}\to \ch_{E}$ even in $L_{\loc}^1(\R^2)$ (and obviously in $L_{\loc}^1(U)$).
On the other hand, for any $k\in\N$ and $r<(2k+1)^{-1}$,
\[
P(E_k,U)\le P(E_k, B(0,r))=0,
\]
since $E_k$ does not intersect the open set $ B(0,r)$.
Thus
\[
P(E,U)>\lim_{k\to\infty}P(E_k,U),
\]
that is, lower semicontinuity is violated.

Similarly we see that without the assumption
$\Vert Du\Vert(U)<\infty$, Theorem \ref{thm:characterization of total variational} fails with the choice
$u=\ch_E$, as the left-hand side is $\infty$ but the right-hand side is zero.
\end{example}

It would be interesting to know if the conclusions of Theorem \ref{thm:characterization of total variational} and Theorem \ref{thm:lower semic in quasiopen sets}
actually
characterize $1$-quasiopen sets.

\begin{openproblem}
Let $U\subset X$ be a $\mu$-measurable set such that the conclusion of Theorem
\ref{thm:characterization of total variational} or the conclusion of Theorem
\ref{thm:lower semic in quasiopen sets} holds.
Is $U$ then a $1$-quasiopen set?
\end{openproblem}

To conclude this section, we apply Lemma \ref{lem:capacity and Newtonian function} to prove a somewhat different but quite natural characterization of $1$-quasiopen sets, given in Proposition \ref{prop:quasiopen sets characterization} below.
For other characterizations of quasiopen sets, see \cite{BBM-QP}.
First we take note of the following facts.
By \cite[Theorem 4.3, Theorem 5.1]{HaKi} we know that for any $A\subset X$,
\begin{equation}\label{eq:null sets of Hausdorff measure and capacity}
\capa_1(A)=0\quad\ \textrm{if and only if}\quad\ \mathcal H(A)=0.
\end{equation}

The following proposition follows from
\cite[Corollary 4.2]{L2} (which is originally based on \cite[Theorem 1.1]{LaSh}).

\begin{proposition}\label{prop:quasisemicontinuity}
Let $u\in\BV_{\loc}(X)$ and let $\eps>0$. Then there exists an open set $G\subset X$ with $\capa_1(G)<\eps$
such that $u^{\wedge}|_{X\setminus G}$ is real-valued lower semicontinuous and
$u^{\vee}|_{X\setminus G}$ is
real-valued upper semicontinuous.
\end{proposition}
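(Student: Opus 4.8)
The plan is to obtain the statement from \cite[Corollary 4.2]{L2}, which provides precisely this form of quasi-semicontinuity for functions in $\BV(X)$, and then to pass from the global class $\BV(X)$ to $\BV_{\loc}(X)$ by a routine exhaustion argument. The two facts that make the passage work are that lower (resp.\ upper) semicontinuity and real-valuedness are local properties -- so they can be verified on the members of an open cover -- and that $\capa_1$ is countably subadditive, so the small exceptional sets produced on the individual pieces of the exhaustion can simply be combined.

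Concretely, I would fix a base point $x_0\in X$, so that $X=\bigcup_{j=1}^{\infty}B(x_0,j)$, and for each $j\in\N$ choose $\eta_j\in\Lip_c(X)$ with $0\le\eta_j\le 1$, $\eta_j=1$ on $B(x_0,j)$, and $\supp\eta_j\subset B(x_0,j+1)$; then $u\eta_j\in\BV(X)$. Applying \cite[Corollary 4.2]{L2} to $u\eta_j$ with parameter $\eps 2^{-j-1}$ yields an open set $G_j$ with $\capa_1(G_j)<\eps 2^{-j-1}$ such that $(u\eta_j)^{\wedge}|_{X\setminus G_j}$ is real-valued lower semicontinuous and $(u\eta_j)^{\vee}|_{X\setminus G_j}$ is real-valued upper semicontinuous. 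Since $u\eta_j=u$ on the open set $B(x_0,j)$, and the values $u^{\wedge}(x)$, $u^{\vee}(x)$ depend only on the behavior of $u$ in arbitrarily small neighborhoods of $x$, we have $(u\eta_j)^{\wedge}=u^{\wedge}$ and $(u\eta_j)^{\vee}=u^{\vee}$ on $B(x_0,j)$. Setting $G:=\bigcup_{j=1}^{\infty}G_j$, the set $G$ is open with $\capa_1(G)<\eps/2$ by countable subadditivity, and the sets $(X\setminus G)\cap B(x_0,j)$, $j\in\N$, form an open cover of $X\setminus G$. On each of these sets $u^{\wedge}$ agrees with $(u\eta_j)^{\wedge}$, and since $(X\setminus G)\cap B(x_0,j)\subset X\setminus G_j$ while the restriction of a real-valued lower semicontinuous function to a subset is again real-valued lower semicontinuous, $u^{\wedge}|_{X\setminus G}$ is real-valued lower semicontinuous by locality; the same reasoning, with ``upper'' in place of ``lower'', handles $u^{\vee}$.

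The only point I expect to require genuine care is real-valuedness, since a priori $u^{\wedge}$ could take the value $-\infty$ and $u^{\vee}$ the value $+\infty$ on some set. If \cite[Corollary 4.2]{L2} already produces real-valued representatives, the argument above is complete; otherwise one inserts one step. The set $N:=\{u^{\wedge}=-\infty\}\cup\{u^{\vee}=+\infty\}$ has zero codimension-one Hausdorff measure: off a set that is $\sigma$-finite with respect to $\cH$, both $u^{\wedge}$ and $u^{\vee}$ equal the finite approximate limit of $u$, and on that ``jump'' set they are finite $\cH$-almost everywhere as well (cf.\ \cite{LaSh}). Hence $\capa_1(N)=0$ by \eqref{eq:null sets of Hausdorff measure and capacity}, and by the outer regularity of $\capa_1$ one may enlarge $G$ by an open set of capacity less than $\eps/2$ containing $N$; this keeps $\capa_1(G)<\eps$ and, by the same locality argument, preserves all the semicontinuity and finiteness conclusions.
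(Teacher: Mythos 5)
Your proposal is correct and takes essentially the same route as the paper, whose entire proof consists of deriving the proposition from \cite[Corollary 4.2]{L2} (itself based on \cite[Theorem 1.1]{LaSh}). Your added exhaustion/localization argument with cutoffs $u\eta_j$, the countable subadditivity of $\capa_1$, and the capacity-zero treatment of the set where $u^{\wedge}$ or $u^{\vee}$ is infinite are routine safeguards covering the possibility that the cited corollary is stated only for $\BV(X)$ rather than $\BV_{\loc}(X)$, and they are carried out correctly.
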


Moreover, $1$-quasiopen sets can be perturbed in the following way.

\begin{lemma}\label{lem:stability of quasiopen sets}
Let $U\subset X$ be a $1$-quasiopen set and let $A\subset X$ be $\mathcal H$-negligible.
Then $U\setminus A$ and $U\cup A$ are $1$-quasiopen sets.
\end{lemma}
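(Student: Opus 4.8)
The plan is to handle the two claims separately, each by exhibiting, for a given $\eps>0$, a small open set whose union with $U\setminus A$ (respectively $U\cup A$) is open. Throughout I would use \eqref{eq:null sets of Hausdorff measure and capacity}: since $A$ is $\mathcal H$-negligible, $\capa_1(A)=0$, and by the outer capacity property there exists an open set $W\supset A$ with $\capa_1(W)<\eps/2$.

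For $U\setminus A$: fix $\eps>0$ and, using $1$-quasiopenness of $U$, pick an open $G$ with $\capa_1(G)<\eps/2$ and $U\cup G$ open. Choose $W\supset A$ open with $\capa_1(W)<\eps/2$. Set $G':=G\cup W$; then $G'$ is open with $\capa_1(G')<\eps$ by subadditivity. The point is that
\[
(U\setminus A)\cup G' \;=\; (U\setminus A)\cup G\cup W \;=\; (U\cup G)\cup W,
\]
where the last equality holds because $A\subset W$, so adjoining $W$ restores any points of $U$ removed by deleting $A$; this is a union of two open sets, hence open. Since $\eps>0$ was arbitrary, $U\setminus A$ is $1$-quasiopen.

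For $U\cup A$: again fix $\eps>0$, pick open $G\supset$ with $\capa_1(G)<\eps/2$ and $U\cup G$ open, and open $W\supset A$ with $\capa_1(W)<\eps/2$. Set $G':=G\cup W$, open with $\capa_1(G')<\eps$. Then
\[
(U\cup A)\cup G' \;=\; (U\cup A)\cup G\cup W \;=\; (U\cup G)\cup W
\]
since $A\subset W$ makes the $A$ term redundant; again this is open as a union of two open sets. Hence $U\cup A$ is $1$-quasiopen.

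There is essentially no obstacle here: the argument is a short set-theoretic manipulation resting entirely on two facts already available in the excerpt, namely the equivalence $\capa_1(A)=0 \Leftrightarrow \mathcal H(A)=0$ and the outer regularity of $\capa_1$, together with countable (here, finite) subadditivity of $\capa_1$. The only mild subtlety worth double-checking is the set identity $(U\setminus A)\cup W = U\cup W$ when $A\subset W$ — which holds because any point of $A\cap U$ deleted on the left is reinstated by $W$ — and the companion identity $(U\cup A)\cup W = U\cup W$ when $A\subset W$; both are immediate. I would present the two cases in parallel and perhaps combine them, since the choice of $G'$ is identical in both.
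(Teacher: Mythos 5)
Your proof is correct and matches the paper's argument: in both cases one uses $\mathcal H(A)=0\Rightarrow\capa_1(A)=0$, outer regularity of $\capa_1$ to get an open $W\supset A$ of small capacity, and the identities $(U\setminus A)\cup(G\cup W)=(U\cup G)\cup W=(U\cup A)\cup(G\cup W)$. The only cosmetic difference is the $\eps/2$ splitting versus the paper's choice of $V$ with $\capa_1(G)+\capa_1(V)<\eps$, which is immaterial.
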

\begin{proof}
Let $\eps>0$. Take an open set $G\subset X$ such that $\capa_1(G)<\eps$ and $U\cup G$ is an open set.
By \eqref{eq:null sets of Hausdorff measure and capacity} we know that $\capa_1(A)=0$, and since $\capa_1$
is an outer capacity, we find an open set $V\supset A$ such that $\capa_1(G)+\capa_1(V)<\eps$.
Now $(U\setminus A)\cup (G\cup V)=(U\cup G)\cup V$ is an open set with $\capa_1(G\cup V)<\eps$, so that
$U\setminus A$ is a $1$-quasiopen set. Similarly, $(U\cup A)\cup (G\cup V)=(U\cup G)\cup V$
is an open set, so that $U\cup A$ is  also a $1$-quasiopen set.
\end{proof}

In the following, $\Delta$ denotes the symmetric difference.

\begin{proposition}\label{prop:quasiopen sets characterization}
Let $U\subset X$. The following are equivalent:
\begin{enumerate}[{(1)}]
\item $U$ is $1$-quasiopen.
\item There exists  $u\in N_{\loc}^{1,1}(X)$ with $\mathcal H(\{u>0\}\Delta U)=0$.
\item There exists  $u\in\BV_{\loc}(X)$ with $\mathcal H(\{u^{\wedge}>0\}\Delta U)=0$.
\end{enumerate}
\end{proposition}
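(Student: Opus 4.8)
The plan is to prove the cycle $(1)\Rightarrow(2)\Rightarrow(3)\Rightarrow(1)$, the implication $(1)\Rightarrow(2)$ being the substantial one. For it I would construct an explicit Newtonian function. Since $U$ is $1$-quasiopen, for each $k\in\N$ pick an open set $G_k$ with $\capa_1(G_k)<2^{-k}$ and $U\cup G_k$ open, and apply Lemma~\ref{lem:capacity and Newtonian function} with $\eps=2^{-k}$ to obtain an open set $V_k\supset G_k$ with $\capa_1(V_k)\le 2C_2 2^{-k}$ and a function $\eta_k\in N^{1,1}_0(V_k)$ with $0\le\eta_k\le 1$ on $X$, $\eta_k=1$ on $G_k$, and $\Vert\eta_k\Vert_{N^{1,1}(X)}\le 2C_2 2^{-k}$. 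Let $\delta_k:=\min\{\dist(\cdot,X\setminus(U\cup G_k)),1\}$, a $1$-Lipschitz function with values in $[0,1]$ and $\{\delta_k>0\}=U\cup G_k$. I would then put
\[
u:=\sum_{k=1}^{\infty}2^{-k}\max\{\delta_k-\eta_k,\,0\}.
\]
Each summand lies in $N^{1,1}_{\loc}(X)$, has values in $[0,2^{-k}]$, and admits a $1$-weak upper gradient bounded by $2^{-k}(1+g_{\eta_k})$, so a routine estimate (the tails are controlled by $\sum_k 2^{-k}(\mu(B)+\Vert\eta_k\Vert_{N^{1,1}(X)})<\infty$) shows the series converges absolutely in $N^{1,1}(B)$ for every ball $B$; hence $u\in N^{1,1}_{\loc}(X)$. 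The Newtonian representative of the limit differs from the pointwise sum only on a set of $1$-capacity zero, hence $\mathcal H$-negligible by~\eqref{eq:null sets of Hausdorff measure and capacity}, so it is harmless to identify $u$ with the pointwise sum.

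Next I would check $\mathcal H(\{u>0\}\Delta U)=0$. If $x\notin U$, then for each $k$ either $x\notin U\cup G_k$, so $\delta_k(x)=0$, or $x\in G_k$, so $\eta_k(x)=1\ge\delta_k(x)$; in either case the $k$-th summand vanishes at $x$, so $u(x)=0$, proving $\{u>0\}\subset U$. If instead $x\in U$ and $x\notin V_j$ for some $j$, then $\eta_j(x)=0$ (since $\eta_j\in N^{1,1}_0(V_j)$) while $\delta_j(x)>0$ because $x$ lies in the open set $U\cup G_j$; hence $u(x)\ge 2^{-j}\delta_j(x)>0$. Consequently $U\setminus\{u>0\}\subset\bigcap_j V_j$, and $\capa_1(\bigcap_j V_j)\le\inf_j\capa_1(V_j)=0$, so this set is $\mathcal H$-negligible by~\eqref{eq:null sets of Hausdorff measure and capacity}. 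The two inclusions together give $(2)$.

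The implication $(2)\Rightarrow(3)$ is then immediate: $u\in N^{1,1}_{\loc}(X)\subset\BV_{\loc}(X)$, and since Newtonian functions have Lebesgue points outside a set of $1$-capacity zero, $u=u^{\wedge}$ outside such a set, hence $\mathcal H$-a.e.\ by~\eqref{eq:null sets of Hausdorff measure and capacity}, so $\mathcal H(\{u^{\wedge}>0\}\Delta\{u>0\})=0$ and $(3)$ follows. For $(3)\Rightarrow(1)$, fix $\eps>0$ and use Proposition~\ref{prop:quasisemicontinuity} to get an open set $G$ with $\capa_1(G)<\eps$ such that $u^{\wedge}|_{X\setminus G}$ is real-valued and lower semicontinuous; then $\{u^{\wedge}>0\}\cap(X\setminus G)$ is relatively open in $X\setminus G$, i.e.\ equals $W\cap(X\setminus G)$ for some open $W\subset X$, so $\{u^{\wedge}>0\}\cup G=W\cup G$ is open. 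Hence $\{u^{\wedge}>0\}$ is $1$-quasiopen, and since $U$ differs from it by the $\mathcal H$-negligible set $\{u^{\wedge}>0\}\Delta U$, two applications of Lemma~\ref{lem:stability of quasiopen sets} (first removing $\{u^{\wedge}>0\}\setminus U$, then adjoining $U\setminus\{u^{\wedge}>0\}$) show $U$ is $1$-quasiopen.

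The main obstacle is the construction in $(1)\Rightarrow(2)$: producing a single Newtonian function whose positivity set agrees with $U$ up to $\mathcal H$-negligible sets. The delicate balance is that destroying positivity on all of $X\setminus U$ forces $\eta_k\equiv1$ on $G_k$ (so a plain capacitary potential of $G_k$ will not do, which is exactly why Lemma~\ref{lem:capacity and Newtonian function}, rather than the mere definition of capacity, is invoked), whereas retaining positivity on $\mathcal H$-almost every point of $U$ relies on $\eta_k$ vanishing off $V_k$ together with $\capa_1(V_k)\to0$. Granting Proposition~\ref{prop:quasisemicontinuity}, Lemma~\ref{lem:stability of quasiopen sets}, and~\eqref{eq:null sets of Hausdorff measure and capacity}, the remaining implications are routine.
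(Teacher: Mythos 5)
Your proposal is correct and follows essentially the same route as the paper: the same cycle $(1)\Rightarrow(2)\Rightarrow(3)\Rightarrow(1)$, the same key construction $\bigl(\min\{1,\dist(\cdot,X\setminus(U\cup G_k))\}-\eta_k\bigr)_+$ built from Lemma \ref{lem:capacity and Newtonian function} for $(1)\Rightarrow(2)$, and the same use of Lebesgue points, Proposition \ref{prop:quasisemicontinuity} and Lemma \ref{lem:stability of quasiopen sets} for the other implications. The only cosmetic difference is that you combine the pieces via a weighted series (identifying the Newtonian limit with the pointwise sum quasi-everywhere), whereas the paper takes the pointwise supremum of the scaled functions and invokes \cite[Lemma 1.52]{BB} directly.
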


\begin{proof}
\hfill
\begin{itemize}
\item $(1)\implies (2)$: Take a sequence of open sets $G_i\subset X$ such that $\capa_1(G_i)<2^{-i}$
and each $U\cup G_i$ is an open set. Then define
\[
v_i(x):=2^{-i}\min\{1,\dist(x,X\setminus (U\cup G_i))\},\quad x\in X,\ \ i\in\N.
\]
By
Lemma \ref{lem:capacity and Newtonian function} there exist sets $V_i\supset G_i$ and
functions $\eta_i\in N_0^{1,1}(V_i)$ such that $0\le\eta_i\le 1$, $\eta_i=1$ on $G_i$, and
$\Vert \eta_i\Vert_{N^{1,1}(X)}\le 2^{-i}C_2$.
Let $u_i:=(v_i-\eta_i)_+$.
Then $0\le u_i\le 1$, $u_i=v_i>0$ on $U\setminus V_i$, and $u_i=0$
on $G_i$. Since also $u_i\le v_i=0$ on $X\setminus (U\cup G_i)$, in total $u_i=0$ on $X\setminus U$.
For any bounded open set $\Om\subset X$,
we have
\begin{align*}
\Vert u_i\Vert_{N^{1,1}(\Om)}
&\le \Vert v_i\Vert_{N^{1,1}(\Om)}+\Vert \eta_i\Vert_{N^{1,1}(\Om)}\\
&\le \Vert v_i\Vert_{L^1(\Om)}+\int_\Om g_{v_i}\,d\mu+\Vert\eta_i\Vert_{N^{1,1}(X)}\\
&\le 2^{-i}\mu(\Om)+2^{-i}\mu(\Om)+2^{-i}C_2.
\end{align*}
Let $u:=\sup_{i\in\N}u_i$.
Then $\sup_{i\in\N}g_{u_i}$ is a $1$-weak upper gradient of $u$, see \cite[Lemma 1.52]{BB}.
Thus $\Vert u\Vert_{N^{1,1}(\Om)}<\infty$, and so $u\in N_{\loc}^{1,1}(X)$. Moreover, $u>0$ on $U\setminus \bigcap_{i=1}^{\infty}V_i$,
where $\capa_1(\bigcap_{i=1}^{\infty}V_i)=0$ and thus $\mathcal H(\bigcap_{i=1}^{\infty}V_i)=0$ by
\eqref{eq:null sets of Hausdorff measure and capacity}.
On the other hand, $u=0$ on $X\setminus U$.
Thus $\mathcal H(\{u>0\}\Delta U)=0$.

\item $(2)\implies (3)$: Take $u\in N_{\loc}^{1,1}(X)$ with
$\mathcal H(\{u>0\}\Delta U)=0$.
We know that $u$
has a Lebesgue point at $\mathcal H$-a.e. $x\in X$, see \cite[Theorem 4.1, Remark 4.2]{KKST}
and \eqref{eq:null sets of Hausdorff measure and capacity}. Thus $u(x)=u^{\wedge}(x)$
at $\mathcal H$-a.e. $x\in X$, and so $\mathcal H(\{u^{\wedge}>0\}\Delta U)=0$.
Furthermore, $N_{\loc}^{1,1}(X)\subset \BV_{\loc}(X)$ by the discussion after
\eqref{eq:definition of total variation repeated}. Thus $u\in\BV_{\loc}(X)$. 

\item $(3)\implies (1)$: Take  $u\in\BV_{\loc}(X)$ with $\mathcal H(\{u^{\wedge}>0\}\Delta U)=0$. By
Proposition \ref{prop:quasisemicontinuity}, there exist open sets $G_i\subset X$ such that
$\capa_1(G_i)\to 0$
and for each $i\in\N$, $u^{\wedge}|_{X\setminus G_i}$ is a lower semicontinuous function. Hence
the set $\{u^{\wedge}>0\}$ is open in the subspace topology of $X\setminus G_i$, and so
the sets $\{u^{\wedge}>0\}\cup G_i$ are open (in $X$). We conclude that $\{u^{\wedge}>0\}$ is a $1$-quasiopen
set, and then by Lemma \ref{lem:stability of quasiopen sets}, $U$ is also $1$-quasiopen.
\end{itemize}
\end{proof}

\section{Uniform absolute continuity}

In this section we use the lower semicontinuity result proved in the previous section to
show that the variation measures of a sequence of $\BV$ functions
converging in the strict sense are uniformly absolutely continuous with respect to the
$1$-capacity $\capa_1$.

First recall the following definition.
Given a $\mu$-measurable set $H\subset X$, a sequence of functions $(g_i)\subset L^1(H)$
is said to be \emph{uniformly integrable} if the following two conditions are satisfied.
First, for every $\eps>0$ there exists a $\mu$-measurable set $D\subset H$ with $\mu(D)<\infty$
such that 
\[
\int_{H\setminus D}g_i\,d\mu<\eps \quad\textrm{for all }i\in\N.
\]
Second, for every $\eps>0$ there exists $\delta>0$ such that if $A\subset H$ is a $\mu$-measurable set with $\mu(A)<\delta$, then
\[
\int_{A}g_i\,d\mu<\eps \quad\textrm{for all }i\in\N.
\]

The second condition can be called the uniform absolute continuity of the
measures $g_i\mu$ with
respect to $\mu$.
The variation measure of a $\BV$ function is usually not absolutely continuous
with respect to $\mu$, but according to Lemma
\ref{lem:absolute cont of variation measure wrt capacity}, it is absolutely
continuous with respect to the $1$-capacity.
Thus we can analogously talk about the variation measures of a sequence of $\BV$ functions being uniformly absolutely continuous with respect to the $1$-capacity.

Before stating our main theorem, we gather a few preliminary results. For these,
we will also need the concept of $\BV$-capacity, which is defined for a set $A\subset X$
by
\[
\capa_{\BV}(A):=\inf \Vert u\Vert_{\BV(X)},
\]
where the infimum is taken over all $u\in\BV(X)$ such that $u\ge 1$ in a neighborhood of $A$.
By \cite[Theorem 3.4]{HaKi} we know that if $A_1\subset A_2\subset \ldots\subset X$, then
\begin{equation}\label{eq:continuity of BVcap}
\capa_{\BV}\left(\bigcup_{i=1}^{\infty}A_i\right)=\lim_{i\to\infty} \capa_{\BV}(A_i).
\end{equation}
On the other hand, by \cite[Theorem 4.3]{HaKi} there is a constant
$C_{\textrm{cap}}(C_d,C_P,\lambda)\ge 1$ such that for any
$A\subset X$,
\begin{equation}\label{eq:Newtonian and BV capacities are comparable}
\capa_{\BV}(A)\le \capa_1(A)\le C_{\textrm{cap}}\capa_{\BV}(A).
\end{equation}
Thus the $1$-capacity and the $\BV$-capacity can often be used interchangeably, but the $\BV$-capacity
has the advantage that it is continuous with respect to increasing sequences of sets.

\begin{lemma}\label{lem:complete metric space}
Let $\Omega\subset X$ be an arbitrary set.
The space of sets $A\subset \Omega$ with $\capa_1(A)<\infty$, equipped with the metric
\[
\capa_1(A_1\Delta A_2),\quad A_1,A_2\subset \Omega,
\]
is a complete metric space if we identify sets $A_1,A_2\subset \Omega$ with
$\capa_1(A_1\Delta A_2=0$.
\end{lemma}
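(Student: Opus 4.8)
The plan is to first verify that the stated quantity is a pseudometric and then establish completeness by the standard argument of extracting a rapidly convergent subsequence.

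For the metric axioms, symmetry of $\capa_1(A_1\Delta A_2)$ is immediate and $\capa_1(A\Delta A)=\capa_1(\emptyset)=0$. Both the triangle inequality and the transitivity of the relation ``$\capa_1(A_1\Delta A_2)=0$'' rest on the elementary inclusion $A_1\Delta A_3\subset(A_1\Delta A_2)\cup(A_2\Delta A_3)$ combined with the monotonicity and (finite) subadditivity of $\capa_1$ recalled in Section~\ref{sec:prelis}; thus identifying two sets $A_1,A_2\subset\Omega$ whenever $\capa_1(A_1\Delta A_2)=0$ is a legitimate equivalence relation and the pseudometric descends to a genuine metric on the quotient. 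The distances are finite, since $A_1\Delta A_2\subset A_1\cup A_2$ gives $\capa_1(A_1\Delta A_2)\le\capa_1(A_1)+\capa_1(A_2)<\infty$ when $\capa_1(A_1),\capa_1(A_2)<\infty$.

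For completeness, let $(A_i)_{i\in\N}$ be a Cauchy sequence. Passing to a subsequence $(A_{i_k})_{k\in\N}$ we may assume $\capa_1(A_{i_k}\Delta A_{i_{k+1}})<2^{-k}$ for every $k$. I would then take $A:=\liminf_{k\to\infty}A_{i_k}=\bigcup_{n=1}^{\infty}\bigcap_{k\ge n}A_{i_k}$ (the $\limsup$ would serve equally well); note $A\subset\Omega$. The key elementary observation is that if a point $x$ lies in exactly one of $A_{i_j}$ and $A_{i_l}$ for some $j<l$, then $x\in A_{i_k}\Delta A_{i_{k+1}}$ for some $k$ with $j\le k\le l-1$. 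Using this, I claim that for every $n$,
\[
A_{i_n}\Delta A\ \subset\ \bigcup_{k\ge n}\bigl(A_{i_k}\Delta A_{i_{k+1}}\bigr).
\]
Indeed, if $x\in A\setminus A_{i_n}$, then $x\in\bigcap_{k\ge N}A_{i_k}$ for some $N>n$, and applying the observation to the indices $n<N$ places $x$ in the right-hand side; if $x\in A_{i_n}\setminus A$, then, letting $k_0>n$ be the least index with $x\notin A_{i_{k_0}}$, the observation applied to $k_0-1<k_0$ again places $x$ in the right-hand side. By countable subadditivity of $\capa_1$ it follows that
\[
\capa_1(A_{i_n}\Delta A)\le\sum_{k\ge n}\capa_1\bigl(A_{i_k}\Delta A_{i_{k+1}}\bigr)<\sum_{k\ge n}2^{-k}=2^{1-n},
\]
which tends to $0$ as $n\to\infty$. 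In particular $\capa_1(A)\le\capa_1(A_{i_1})+\capa_1(A_{i_1}\Delta A)<\infty$, so $A$ lies in the space, and $A_{i_n}\to A$ in the metric. Since a Cauchy sequence with a convergent subsequence converges to that same limit, $A_i\to A$, establishing completeness.

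I do not anticipate any real obstacle: the whole argument is a routine application of the monotonicity and countable subadditivity of $\capa_1$, and the only point that needs a little care is the elementary set-theoretic inclusion $A_{i_n}\Delta A\subset\bigcup_{k\ge n}(A_{i_k}\Delta A_{i_{k+1}})$ together with the right choice of candidate limit set $A$.
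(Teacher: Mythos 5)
Your proof is correct, but it follows a genuinely different (and more elementary) route than the paper. The paper takes the $\limsup$ set $A=\bigcap_{k}\bigcup_{l\ge k}A_{i_l}$ as the candidate limit and estimates the two halves of $A_{i_j}\Delta A$ separately: while $\capa_1(A\setminus A_{i_j})$ is handled by countable subadditivity much as you do, the set $A_{i_j}\setminus A$ is written as an increasing union, and since $\capa_1$ is not invoked as being continuous along increasing sequences, the paper passes through the $\BV$-capacity, using the continuity property \eqref{eq:continuity of BVcap} together with the comparability \eqref{eq:Newtonian and BV capacities are comparable}, which costs a constant $C_{\textrm{cap}}$ in the bound $\capa_1(A_{i_j}\setminus A)\le 2^{-j+1}C_{\textrm{cap}}$. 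You instead take the $\liminf$ set and prove the single set-theoretic inclusion $A_{i_n}\Delta A\subset\bigcup_{k\ge n}(A_{i_k}\Delta A_{i_{k+1}})$ (your case analysis via the last membership transition is sound, and as you note it would work just as well for the $\limsup$ set), after which countable subadditivity of $\capa_1$ alone gives the clean bound $2^{1-n}$. What your argument buys is self-containedness: it needs only monotonicity and countable subadditivity of $\capa_1$, avoids the $\BV$-capacity machinery \eqref{eq:continuity of BVcap}--\eqref{eq:Newtonian and BV capacities are comparable} entirely, and yields a constant-free estimate; the paper's route, on the other hand, fits naturally with the $\capa_{\BV}$ tools it has already set up in that section and avoids tracking where the membership of a point changes along the subsequence. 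Your remaining steps (metric axioms via $A_1\Delta A_3\subset(A_1\Delta A_2)\cup(A_2\Delta A_3)$, finiteness of distances, $\capa_1(A)<\infty$, and upgrading subsequential convergence to convergence of the full Cauchy sequence) match the paper or are standard and correctly handled.
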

\begin{proof}
We know that $\capa_1$ is an outer measure, see e.g. \cite[Theorem 6.7]{BB}, and thus
it is straightforward to check that $\capa_1(\cdot\Delta\cdot)$ is indeed a metric.
In particular, note that if $A_1,A_2\subset \Om$ with $\capa_1(A_1),\capa_1(A_2)<\infty$, then
$\capa_1(A_1\Delta A_2)\le \capa_1(A_1\cup A_2)<\infty$, so the distance is always finite.
To verify completeness, let $\{A_i\}_{i\in\N}$ be a Cauchy sequence. We can pick a subsequence $\{A_{i_j}\}_{j\in\N}$ such that $\capa_1(A_{i_j}\Delta A_{i_{j+1}})<2^{-j}$ for all 
$j\in\N$. It follows that $\capa_1(A_{i_j}\Delta A_{i_{l}})<2^{-j+1}$ for all $l> j$.
Let
\[
A:=\bigcap_{k=1}^{\infty}\bigcup_{l=k}^{\infty}A_{i_l},
\]
so that $A\subset \Omega$.
For a fixed $j\in\N$, we now have
\begin{align*}
A_{i_j}\setminus \bigcap_{k=1}^{\infty}\bigcup_{l=k}^{\infty}A_{i_l}
= A_{i_j}\cap \bigcup_{k=1}^{\infty}\left(X\setminus \bigcup_{l=k}^{\infty}A_{i_l}\right)
&= \bigcup_{k=1}^{\infty}
\left(A_{i_j}\cap \left(X\setminus \bigcup_{l=k}^{\infty}A_{i_l}\right)\right)\\
&= \bigcup_{k=1}^{\infty}
\left(A_{i_j}\setminus \bigcup_{l=k}^{\infty}A_{i_l}\right).
\end{align*}
Thus by \eqref{eq:continuity of BVcap} and \eqref{eq:Newtonian and BV capacities are comparable},
\begin{align*}
\capa_1(A_{i_j}\setminus A)
&=\capa_1\left(\bigcup_{k=1}^{\infty}\left(A_{i_j}\setminus
\bigcup_{l=k}^{\infty}A_{i_l}\right)\right)\\
&\le C_{\textrm{cap}}\capa_{\BV}\left(\bigcup_{k=1}^{\infty}\left(A_{i_j}\setminus
\bigcup_{l=k}^{\infty}A_{i_l}\right)\right)\\
&=C_{\textrm{cap}}\lim_{k\to\infty}\capa_{\BV}\left(A_{i_j}\setminus \bigcup_{l=k}^{\infty}A_{i_l}\right)\\
&\le C_{\textrm{cap}}\lim_{k\to\infty}\capa_{1}\left(A_{i_j}\setminus \bigcup_{l=k}^{\infty}A_{i_l}\right)\\
&\le C_{\textrm{cap}}\lim_{k\to\infty}\capa_1\left(A_{i_j}\setminus A_{i_k}\right)\\
&\le C_{\textrm{cap}}\lim_{k\to\infty}2^{-j+1}\\
&=2^{-j+1}C_{\textrm{cap}}
\to 0\quad\textrm{as }j\to \infty.
\end{align*}
Conversely,
\begin{align*}
\capa_1(A\setminus A_{i_j})
\le \capa_1\left(\bigcup_{l=j}^{\infty}A_{i_l}\setminus A_{i_j}\right)
&=\capa_1\left(\bigcup_{l=j}^{\infty}(A_{i_{l+1}}\setminus A_{i_l})\right)\\
&\le \sum_{l=j}^{\infty}\capa_1(A_{i_{l+1}}\setminus A_{i_l})\\
&\le \sum_{l=j}^{\infty} 2^{-l}\\
&=2^{-j+1}\to 0\quad\textrm{as }j\to\infty.
\end{align*}
Thus $\capa_1(A_{i_j}\Delta A)\to 0$ as $j\to\infty$, and since $\{A_i\}_{i\in\N}$ is a Cauchy sequence, we have $\capa_1(A_i\Delta A)\to 0$ as $i\to\infty$. It is also clear that $\capa_1(A)<\infty$.
\end{proof}

The following proposition,
which follows from Proposition \ref{prop:quasisemicontinuity},
provides many $1$-quasiopen sets in which the lower semicontinuity
result of the previous section can be applied; recall the definitions of the measure theoretic interior $I_E$ and the measure theoretic exterior $O_E$ from
\eqref{eq:definition of measure theoretic interior} and \eqref{eq:definition of measure theoretic exterior}.

\begin{proposition}[{\cite[Proposition 4.2]{L3}}]\label{prop:set of finite perimeter is quasiopen}
Let $\Omega\subset X$ be an open set and let $E\subset X$ be a $\mu$-measurable set with
$P(E,\Omega)<\infty$. Then the sets $I_E\cap\Omega$ and $O_E\cap\Omega$ are $1$-quasiopen.
\end{proposition}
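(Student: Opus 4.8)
The plan is to mimic the implication $(3)\implies(1)$ of Proposition~\ref{prop:quasiopen sets characterization}, producing suitable $\BV$ functions and feeding them to Proposition~\ref{prop:quasisemicontinuity}. First I would observe that it suffices to prove the assertion for the measure theoretic interior: since $O_E=I_{X\setminus E}$ and $P(X\setminus E,\Omega)=P(E,\Omega)<\infty$, the claim for $O_E\cap\Omega$ is obtained by applying the claim for $I_{E'}\cap\Omega$ with $E'=X\setminus E$.

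Since $\ch_E$ need not have locally finite variation outside $\Omega$, the next step is to localize. Using that $X$ is proper, choose open sets $\Omega_1\Subset\Omega_2\Subset\cdots\Subset\Omega$ with $\bigcup_{j}\Omega_j=\Omega$, and pick $\eta_j\in\Lip_c(X)$ with $0\le\eta_j\le1$, $\eta_j=1$ on $\Omega_j$, and $\supp\eta_j\subset\Omega_{j+1}$. Set $v_j:=\eta_j\ch_E$. Then $v_j\in\BV(X)$: it lies in $L^1(X)$ because $\overline{\Omega_{j+1}}$ is compact and hence of finite measure, and a standard product (Leibniz) rule for $\BV$ functions together with $\|D\ch_E\|(\Omega_{j+1})=P(E,\Omega_{j+1})\le P(E,\Omega)<\infty$ gives $\|Dv_j\|(X)<\infty$. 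The key point is that $v_j=\ch_E$ on the open set $\Omega_j$, so, approximate limits being a local notion, $v_j^{\wedge}=(\ch_E)^{\wedge}$ on $\Omega_j$. From the partition $X=I_E\cup\partial^*E\cup O_E$ and the definition of the approximate lower limit one checks directly that $(\ch_E)^{\wedge}=\ch_{I_E}$, and hence
\[
\{v_j^{\wedge}>0\}\cap\Omega_j=I_E\cap\Omega_j .
\]

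Now apply Proposition~\ref{prop:quasisemicontinuity} to $v_j\in\BV_{\loc}(X)$: for every $\eps>0$ there is an open set $G\subset X$ with $\capa_1(G)<\eps$ such that $v_j^{\wedge}|_{X\setminus G}$ is lower semicontinuous; then $\{v_j^{\wedge}>0\}$ is open in the subspace $X\setminus G$, so $\{v_j^{\wedge}>0\}\cup G$ is open in $X$, showing that $\{v_j^{\wedge}>0\}$ is $1$-quasiopen. Intersecting with the open (hence $1$-quasiopen) set $\Omega_j$, and using that the intersection of a $1$-quasiopen set with an open set is $1$-quasiopen, we get that $I_E\cap\Omega_j$ is $1$-quasiopen. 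Finally, $I_E\cap\Omega=\bigcup_{j}(I_E\cap\Omega_j)$ is a countable union of $1$-quasiopen sets, and such a union is again $1$-quasiopen: given $\eps>0$, choose open $G_j$ with $\capa_1(G_j)<2^{-j}\eps$ and $(I_E\cap\Omega_j)\cup G_j$ open, and put $G:=\bigcup_{j}G_j$; then $\capa_1(G)<\eps$ by countable subadditivity and $(I_E\cap\Omega)\cup G=\bigcup_{j}\bigl((I_E\cap\Omega_j)\cup G_j\bigr)$ is open.

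There is no serious obstacle once Proposition~\ref{prop:quasisemicontinuity} is available; the only points needing a little care are the verification $v_j\in\BV(X)$ and the elementary identity $(\ch_E)^{\wedge}=\ch_{I_E}$ (equivalently $(\ch_{X\setminus E})^{\wedge}=\ch_{O_E}$), together with the routine bookkeeping that a countable union of $1$-quasiopen sets is $1$-quasiopen.
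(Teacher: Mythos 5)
Your argument is correct, and it is worth pointing out that the paper itself offers no proof to compare against: Proposition \ref{prop:set of finite perimeter is quasiopen} is imported verbatim from \cite[Proposition 4.2]{L3}. What you have written is a self-contained derivation from Proposition \ref{prop:quasisemicontinuity}, modeled on the implication $(3)\implies(1)$ of Proposition \ref{prop:quasiopen sets characterization}, and this quasi-semicontinuity mechanism (ultimately from \cite{LaSh}) is the same tool that underlies the cited proof, so your route is the natural one rather than a genuinely different one. All the individual steps check out: the reduction via $O_E=I_{X\setminus E}$ and $P(X\setminus E,\Omega)=P(E,\Omega)$; the identity $\{(\ch_E)^{\wedge}>0\}=I_E$ (note the paper never defines $u^{\wedge}$, so you are implicitly using the standard lower approximate limit, which is the intended meaning); the locality of $v_j^{\wedge}$ on the open sets $\Omega_j$; the fact that relative openness of $\{v_j^{\wedge}>0\}$ in $X\setminus G$ makes $\{v_j^{\wedge}>0\}\cup G$ open; and the closure of $1$-quasiopen sets under intersection with open sets (the paper's footnote) and under countable unions via countable subadditivity of $\capa_1$. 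The one step you treat as ``standard,'' namely $\eta_j\ch_E\in\BV(X)$, deserves a sentence but is exactly the ``easy to check'' step in the proof of Proposition \ref{prop:characterization of total variational for bounded functions}: take locally Lipschitz $u_i\to\ch_E$ in $L^1_{\loc}(\Omega)$ with $\int_\Omega g_{u_i}\,d\mu\to P(E,\Omega)$, truncate to $0\le u_i\le 1$, and use $g_{\eta_j u_i}\le \eta_j g_{u_i}+u_i g_{\eta_j}$ together with $\mu(\Omega_{j+1})<\infty$ and the vanishing of $\eta_j u_i$ outside $\supp\eta_j$ to conclude $\Vert D(\eta_j\ch_E)\Vert(X)<\infty$ by lower semicontinuity on open sets. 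With that justification spelled out, your localization--union argument gives a complete proof of the proposition within the paper's toolkit.
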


The $1$-capacity has the following useful rigidity property.

\begin{lemma}\label{lem:capacity of measure theoretic closure}
For any $A\subset X$, we have $\capa_1(I_A\cup \partial^*A)\le \capa_1(A)$.
\end{lemma}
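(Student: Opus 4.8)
The goal is to bound $\capa_1(I_A \cup \partial^* A)$ by $\capa_1(A)$, so I may assume $\capa_1(A) < \infty$. The plan is to take a near-optimal competitor $u \in N^{1,1}(X)$ with $u \ge 1$ on $A$ and $\|u\|_{N^{1,1}(X)} \le \capa_1(A) + \eps$, and to show that (after possibly modifying $u$ on a capacity-null set) the \emph{same} function satisfies $u \ge 1$ on all of $I_A \cup \partial^* A$. The point is that both the measure-theoretic interior and the measure-theoretic boundary of $A$ are ``seen'' by the quasicontinuous representative of $u$: at a point $x \in I_A$, we have $\mu(B(x,r) \setminus A)/\mu(B(x,r)) \to 0$, so balls around $x$ are, in a density sense, almost entirely contained in $\{u \ge 1\}$, and hence the Lebesgue-point value (equivalently $u^\vee(x)$, or the quasicontinuous value) must be at least $1$; a similar argument handles $\partial^* A$, where $A$ merely has positive \emph{upper} density, which still forces the essential upper limit of $u$ at $x$ to be $\ge 1$.

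More precisely, I would first recall that every $u \in N^{1,1}(X)$ is $1$-quasicontinuous (this is in the references cited in the excerpt, e.g.\ via \cite{BBM-QP}), and that it has a Lebesgue point at $\capa_1$-a.e.\ $x \in X$ (as used in the proof of Proposition~\ref{prop:quasiopen sets characterization}, citing \cite{KKST} together with \eqref{eq:null sets of Hausdorff measure and capacity}). Let $N$ be the exceptional set of non-Lebesgue points, so $\capa_1(N) = 0$. For $x \in (I_A \cup \partial^* A) \setminus N$, let $\tilde u(x)$ be the Lebesgue-point value of $u$. Then
\[
\left| \tilde u(x) - \vint{B(x,r)} u \, d\mu \right| \to 0 \quad \text{as } r \to 0.
\]
Since $u \ge 1$ on $A$ and $u \ge 0$ everywhere (we may assume $u \ge 0$ by truncation, as this does not increase the norm), we get
\[
\vint{B(x,r)} u \, d\mu \ge \frac{\mu(B(x,r) \cap A)}{\mu(B(x,r))}.
\]
For $x \in I_E$ with $E = A$ this ratio tends to $1$, giving $\tilde u(x) \ge 1$; for $x \in \partial^* A$ we only get $\limsup_{r\to 0}$ of the ratio positive, which is not immediately enough, so here I would instead use the essential upper limit / the value $u^\vee(x)$: along a subsequence $r_k \to 0$ with $\mu(B(x,r_k)\cap A)/\mu(B(x,r_k)) \ge c > 0$, and using that $u$ restricted to $B(x,r_k)\cap A$ is $\ge 1$, one argues via the density of Lebesgue points and a Poincar\'e-type oscillation estimate that $u^\vee(x) \ge 1$. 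The cleanest route is probably to first replace $u$ by $\min\{u,1\}$, so $0 \le u \le 1$, note $\|u\|_{N^{1,1}(X)}$ does not increase, and then the Lebesgue value satisfies $\tilde u(x) = \lim_r \vint{B(x,r)} u\, d\mu \ge \limsup_r \mu(B(x,r)\cap A)/\mu(B(x,r))$, which is strictly positive on $\partial^* A$; to upgrade ``positive'' to ``$\ge 1$'' I would use that on $A$ we actually have $u = 1$ (after truncation) together with the fact that Lebesgue points of $u$ that lie in $A$ contribute value exactly $1$, and a covering/approximate-continuity argument.

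An alternative, and perhaps more robust, approach avoiding these delicate density computations: let $\Omega \supset A$ be open with $\capa_1(\Omega) < \capa_1(A) + \eps$ (possible since $\capa_1$ is an outer capacity). Since $\Omega$ is open and $A \subset \Omega$, one has $I_A \cup \partial^* A \subset \overline{\Omega}$; this is not quite enough because $\overline\Omega$ need not have small capacity. Instead I would observe that $I_A \subset I_\Omega = \Omega$ (as $\Omega$ is open, $I_\Omega \supset \Omega$, and in fact for open sets $I_\Omega = \Omega$ up to the relevant null sets) and that $\partial^* A \subset \partial^*\Omega \cup (\text{null set})$ — wait, this inclusion can fail — so I would fall back on the density argument above. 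The honest main obstacle is therefore the treatment of $\partial^* A$: showing that a quasicontinuous Sobolev function with $u \ge 1$ on $A$ necessarily has quasicontinuous value $\ge 1$ at points of positive upper density of $A$. I expect this to follow from the Lebesgue-point theorem for $N^{1,1}$ functions combined with the observation that at a point $x \in \partial^* A$ which is also a Lebesgue point of the truncated $u \in [0,1]$, the Lebesgue value equals $\lim_r \vint{B(x,r)} u$, and since $u = 1$ a.e.\ on $A$ and the sets $B(x,r_k) \cap A$ have density bounded below, a second application of the Lebesgue-differentiation / density-of-Lebesgue-points fact forces this limit to be $1$ rather than merely positive. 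Once $\tilde u \ge 1$ on $(I_A \cup \partial^* A) \setminus N$ with $\capa_1(N) = 0$, monotonicity and subadditivity of $\capa_1$ give
\[
\capa_1(I_A \cup \partial^* A) \le \capa_1\big((I_A \cup \partial^* A)\setminus N\big) + \capa_1(N) \le \|u\|_{N^{1,1}(X)} \le \capa_1(A) + \eps,
\]
and letting $\eps \to 0$ finishes the proof.
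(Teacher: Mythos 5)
Your strategy is genuinely different from the paper's: the paper gives no self-contained argument for this lemma at all, but simply combines \cite[Lemma 3.1]{L3} with \cite[Proposition 3.8]{L3}, whereas you argue directly that a near-optimal capacity competitor for $A$ is, up to a $\capa_1$-negligible set, already a competitor for $I_A\cup\partial^*A$. The ingredients you invoke are the right ones and are all available in this setting: truncation does not increase the $N^{1,1}$-norm, $\mathcal H$-a.e.\ (hence, by \eqref{eq:null sets of Hausdorff measure and capacity}, quasi-every) point is a Lebesgue point of an $N^{1,1}(X)$ function by \cite{KKST} (exactly as used in the proof of Proposition \ref{prop:quasiopen sets characterization}), and the final step by monotonicity and countable subadditivity of $\capa_1$ is fine. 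Note also that the arbitrariness of $A$ causes no trouble, since $\mu$ is an outer measure and $A\subset\{v=1\}$ with $\{v=1\}$ $\mu$-measurable.

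The step you yourself flag as ``the honest main obstacle'' --- points of $\partial^*A$ --- is, however, left unproven, and the estimate you actually write down cannot close it: bounding the average $\vint{B(x,r)}v\,d\mu$ from below by the density ratio only yields $v(x)>0$ at a point of positive upper density, as you observe, and the subsequent appeal to a ``covering/approximate-continuity argument'' is not a proof. The missing idea is short: use the full Lebesgue-point property $\vint{B(x,r)}|v-v(x)|\,d\mu\to 0$ rather than mere convergence of the averages. After truncating so that $0\le v\le 1$ and $v=1$ on $A$, at any Lebesgue point $x$ of $v$ and any $r>0$ one has
\[
\vint{B(x,r)}|v-v(x)|\,d\mu\ \ge\ \bigl(1-v(x)\bigr)\,\frac{\mu(B(x,r)\cap\{v=1\})}{\mu(B(x,r))}\ \ge\ \bigl(1-v(x)\bigr)\,\frac{\mu(B(x,r)\cap A)}{\mu(B(x,r))},
\]
and since the left-hand side tends to $0$ while the right-hand ratio has positive $\limsup$ at every $x\in I_A\cup\partial^*A$, necessarily $v(x)=1$. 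This treats $I_A$ and $\partial^*A$ simultaneously and makes the separate discussion of the two cases unnecessary. With this replacement your proof is complete: $v=1$ on $(I_A\cup\partial^*A)\setminus N$, where $N$ is the set of non-Lebesgue points with $\capa_1(N)=0$, so $\capa_1(I_A\cup\partial^*A)\le\Vert v\Vert_{N^{1,1}(X)}\le\capa_1(A)+\eps$, and letting $\eps\to0$ finishes. The alternative route via an open $\Omega\supset A$ that you sketch and discard is indeed a dead end, for the reason you give.
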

\begin{proof}
This follows by combining \cite[Lemma 3.1]{L3} and \cite[Proposition 3.8]{L3}.
\end{proof}

Now we give the main result of this section. The proof is partially based on Baire's category theorem, similarly to the proof of the Vitali-Hahn-Saks theorem concerning uniformly integrable sequences of functions, see e.g. \cite[Theorem 1.30]{AFP}.

\begin{theorem}\label{thm:uniform absolute continuity}
Let $\Omega\subset X$ be an open set,
and suppose that $u_i\to u$ in $L_{\loc}^1(\Omega)$ and $\Vert Du_i\Vert(\Omega)\to \Vert Du\Vert(\Omega)$, with
$\Vert Du\Vert(\Omega)<\infty$ and $\Vert Du_i\Vert(\Omega) <\infty$ for all $i\in\N$.
Then for every $\eps>0$ there exists $\delta>0$ such that if $A\subset \Omega$
with $\capa_1(A)<\delta$, then $\Vert Du_i\Vert(A)<\eps$ for all $i\in\N$.
\end{theorem}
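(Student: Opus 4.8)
The plan is to argue by contradiction using Baire's category theorem on the complete metric space of Lemma \ref{lem:complete metric space}. Suppose the conclusion fails: then there exist $\eps>0$, a sequence of sets $A_k\subset\Om$ with $\capa_1(A_k)\to 0$, and indices $i_k\in\N$ with $\Vert Du_{i_k}\Vert(A_k)\ge\eps$. Since $\Vert Du\Vert$ and each $\Vert Du_i\Vert$ (for the finitely many small indices) are absolutely continuous with respect to $\capa_1$ by Lemma \ref{lem:absolute cont of variation measure wrt capacity}, the bad indices $i_k$ must tend to infinity, so after relabeling we may assume $i_k=k$, i.e. $\Vert Du_k\Vert(A_k)\ge\eps$ with $\capa_1(A_k)\to 0$.

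Next I would set up the Baire argument. Work in the complete metric space $(\mathcal{C},\rho)$ where $\mathcal{C}$ consists of sets $A\subset\Om$ with $\capa_1(A)<\infty$ and $\rho(A,A')=\capa_1(A\Delta A')$, per Lemma \ref{lem:complete metric space}. For each $m\in\N$ define
\[
F_m:=\left\{A\in\mathcal{C}:\ \Vert Du_i\Vert(A)\le\eps/4\ \text{ for all }i\ge m\right\}.
\]
The point is that each $F_m$ is closed in $\mathcal{C}$: this uses the lower semicontinuity of the total variation in $1$-quasiopen sets (Theorem \ref{thm:lower semic in quasiopen sets}), since if $A_n\to A$ in $\rho$ one can pass to a subsequence so that, after enlarging by small-capacity open sets (via Lemma \ref{lem:capacity and Newtonian function} or by outer regularity of $\capa_1$), one has convergence in an appropriate sense; the rigidity Lemma \ref{lem:capacity of measure theoretic closure} together with Proposition \ref{prop:set of finite perimeter is quasiopen} provides $1$-quasiopen sets containing the $A_n$ on which to apply lower semicontinuity of $\Vert Du_i\Vert$. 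Since $u_i\to u$ strictly on $\Om$, for each fixed $A\in\mathcal{C}$ one has $\Vert Du_i\Vert(A)\to\Vert Du\Vert(A)$ along a suitable argument (using lower semicontinuity on both $A$ and $\Om\setminus A$ together with $\Vert Du_i\Vert(\Om)\to\Vert Du\Vert(\Om)$), so $\bigcup_m F_m=\mathcal{C}$.

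By Baire's theorem some $F_m$ has nonempty interior: there is $A_0\in F_m$ and $r>0$ with the $\rho$-ball $B(A_0,r)\subset F_m$. Now for any set $N\subset\Om$ with $\capa_1(N)<r$ we have $A_0\cup N\in B(A_0,r)$ and $A_0\setminus N\in B(A_0,r)$, hence $\Vert Du_i\Vert((A_0\cup N)\setminus(A_0\setminus N))\le\Vert Du_i\Vert(A_0\cup N)+\Vert Du_i\Vert(A_0\setminus N)\le\eps/2$ for all $i\ge m$, and since $(A_0\cup N)\setminus(A_0\setminus N)\supset N\setminus A_0$ one gets $\Vert Du_i\Vert(N\setminus A_0)\le\eps/2$. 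The remaining piece $N\cap A_0$ is handled by Lemma \ref{lem:absolute cont of variation measure wrt capacity} applied to the finitely many measures $\Vert Du_1\Vert,\dots,\Vert Du_{m-1}\Vert,\Vert Du\Vert$ restricted near $A_0$: shrinking $r$ we get $\Vert Du_i\Vert(N)<\eps$ for those indices too. Combined with the first two paragraphs this contradicts $\Vert Du_k\Vert(A_k)\ge\eps$ for large $k$ once $\capa_1(A_k)<r$. The main obstacle I expect is verifying that $F_m$ is closed — that is, correctly invoking the $1$-quasiopen lower semicontinuity of Theorem \ref{thm:lower semic in quasiopen sets} under convergence of sets in the $\capa_1$-metric rather than $L^1$-convergence of characteristic functions, which requires the capacitary estimates of Lemmas \ref{lem:capacity and Newtonian function} and \ref{lem:capacity of measure theoretic closure} to control the perimeter/variation of the approximating sets.
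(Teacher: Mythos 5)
Your overall framework (Baire's category theorem on the metric space of Lemma \ref{lem:complete metric space}) is the same as the paper's, but there is a genuine gap at the covering step: the claim $\bigcup_m F_m=\mathcal{C}$ is false, and the reasoning offered for it does not work. For a fixed $A\in\mathcal{C}$ you cannot conclude $\Vert Du_i\Vert(A)\to\Vert Du\Vert(A)$ "using lower semicontinuity on both $A$ and $\Om\setminus A$'': Theorem \ref{thm:lower semic in quasiopen sets} applies only to $1$-quasiopen sets, and an arbitrary set of finite capacity (and likewise its complement in $\Om$) need not be $1$-quasiopen. Moreover, even granting such convergence, membership of every $A\in\mathcal{C}$ in some $F_m$ would force $\Vert Du\Vert(A)\le\eps/4$ for all $A$ of finite capacity, which fails: take $A$ an open subset of $\Om$ with $\capa_1(A)<\infty$ and $\Vert Du\Vert(A)>\eps/4$; by ordinary lower semicontinuity on open sets, $\liminf_i\Vert Du_i\Vert(A)\ge\Vert Du\Vert(A)$, so $A$ lies in no $F_m$. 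What is missing is precisely the key estimate that makes Baire applicable: one must first show that there is $\alpha>0$ such that every $A\subset\Om$ with $\capa_1(A)<\alpha$ satisfies $\limsup_{i\to\infty}\Vert Du_i\Vert(A)<\eps/2$, and then run the category argument inside the open subset $\{D\subset\Om:\,\capa_1(D)<\alpha\}$ rather than in all of $\mathcal{C}$. This estimate is exactly where the quasiopen machinery enters, and your sketch never produces a quasiopen set to apply it to: the paper encloses $A$ in an open set $V$ with both $\capa_1(V)$ and $P(V,X)$ small (Lemma \ref{lem:covering G by a set of finite perimeter}), notes that $\Om\cap O_V$ is $1$-quasiopen (Proposition \ref{prop:set of finite perimeter is quasiopen}), applies Theorem \ref{thm:lower semic in quasiopen sets} there, and uses $\Vert Du_i\Vert(\Om)\to\Vert Du\Vert(\Om)$ to pass to the complement $\Om\cap(I_V\cup\partial^*V)\supset A$, whose $\Vert Du\Vert$-measure is small by Lemma \ref{lem:capacity of measure theoretic closure} and Lemma \ref{lem:absolute cont of variation measure wrt capacity}. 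Without this step your proof does not go through.

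Two smaller points. The closedness of the sets $F_m$ needs neither Theorem \ref{thm:lower semic in quasiopen sets} nor any capacitary enlargement: it follows at once from the absolute continuity of each fixed $\Vert Du_i\Vert$ with respect to $\capa_1$ (Lemma \ref{lem:absolute cont of variation measure wrt capacity}), since $\Vert Du_i\Vert(D)\le\Vert Du_i\Vert(D\setminus D_n)+\Vert Du_i\Vert(D_n)$ and $\capa_1(D\setminus D_n)\le\capa_1(D\Delta D_n)\to 0$; your appeal to set convergence "in an appropriate sense'' is both vague and unnecessary. Likewise the endgame after Baire is simpler than your symmetric-difference manipulation: if the ball $B(A_0,r)\subset F_m$ and $\capa_1(N)<r$, then $A_0\cup N\in F_m$ and $\Vert Du_i\Vert(N)\le\Vert Du_i\Vert(A_0\cup N)\le\eps/4$ for all $i\ge m$, while the finitely many indices $i<m$ are handled by Lemma \ref{lem:absolute cont of variation measure wrt capacity} applied on $\Om$ (no restriction near $A_0$ is needed). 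The opening reduction by contradiction is harmless but superfluous.
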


\begin{proof}
Fix $\eps>0$. By Lemma \ref{lem:absolute cont of variation measure wrt capacity} there
exists $\alpha>0$ such that if $D\subset \Omega$ with $\capa_1(D)<C_1\alpha$, then
$\Vert Du\Vert(D)<\eps/2$. Fix $A\subset\Omega$ with $\capa_1(A)<\alpha$.
By Lemma \ref{lem:covering G by a set of finite perimeter} we find an open set $V\supset A$
with $\capa_1(V)< C_1\alpha$ and $P(V,X)< C_1\alpha$.
By Lemma \ref{lem:capacity of measure theoretic closure}, also
$\capa_1(I_V\cup \partial^*V)< C_1\alpha$.
By Proposition \ref{prop:set of finite perimeter is quasiopen}, $\Omega\cap O_V$ is a
$1$-quasiopen set, and thus by the lower semicontinuity Theorem \ref{thm:lower semic in quasiopen sets} we get
\[
\Vert Du\Vert (\Omega\cap O_V)\le
\liminf_{i\to\infty}\Vert Du_i\Vert(\Omega\cap O_V).
\]
Since also $\Vert Du_i\Vert(\Omega)\to \Vert Du\Vert(\Omega)$, we have
\[
\Vert Du\Vert (\Omega\setminus O_V)\ge\limsup_{i\to\infty}\Vert Du_i\Vert(\Omega\setminus O_V),
\]
that is,
\[
\Vert Du\Vert (\Omega\cap (I_V\cup \partial^*V))\ge\limsup_{i\to\infty}
\Vert Du_i\Vert(\Omega\cap (I_V\cup \partial^*V)).
\]
But since $\capa_1(I_V\cup \partial^*V)< C_1\alpha$, we get
\[
\limsup_{i\to\infty}\Vert Du_i\Vert(\Omega\cap (I_V\cup \partial^*V))<\eps/2.
\]
Moreover, $A\subset  \Om\cap V\subset \Om\cap(I_V\cup \partial^*V)$,
since $V$ is open.
In conclusion,
\begin{equation}\label{eq:absolute continuity at limit}
A\subset \Omega\  \textrm{and}\ \capa_1(A)<\alpha\quad\textrm{imply}\quad \limsup_{i\to\infty}\Vert Du_i\Vert(A)<\eps/2.
\end{equation}
Consider the metric space defined in Lemma \ref{lem:complete metric space}.
Define the sets
\[
\mathcal A_k:=\{D\subset \Omega:\,\, \capa_1(D)<\infty\ \, \textrm{and}\ \, \sup_{i\ge k}\Vert Du_i\Vert(D)\le\eps/2\},\quad k\in\N.
\]
We show that these sets are closed. Fix $k\in\N$ and then
fix $i\ge k$. Let $D\subset X$ with $\capa_1(D)<\infty$. If  $D_n\in \mathcal A_k$, $n\in\N$,
is a sequence with
$\capa_1(D_n\Delta D)\to 0$, then since
$\Vert Du_i\Vert$ is absolutely continuous
with respect to $\capa_1$, we have
\[
\Vert Du_i\Vert(D)\le \liminf_{n\to\infty}(\Vert Du_i\Vert(D\setminus D_n)+\Vert Du_i\Vert(D_n))
= 0+\liminf_{n\to\infty}\eps/2=\eps/2.
\]
Since $i\ge k$ was arbitrary, we have $D\in \mathcal A_k$, so $\mathcal A_k$ is closed.

Let
\[
\mathcal{Y}:=\{D\subset \Omega:\, \capa_1(D)<\alpha\}.
\]
By \eqref{eq:absolute continuity at limit}, $\mathcal{Y}=\bigcup_{k=1}^{\infty}(\mathcal A_k\cap \mathcal{Y})$.
Since $\mathcal{Y}$ is an open subset of a complete metric space, Baire's category theorem applies.
Thus at least one of the sets $\mathcal A_k$ has nonempty interior in $\mathcal{Y}$. That is, there exists $D\in \mathcal{Y}$ and $\widetilde{\delta}>0$ such that every $H\subset \Omega$ with $\capa_1(H\Delta D)<\widetilde{\delta}$ belongs to $\mathcal A_k$.
Take any $A\subset \Omega$ with $\capa_1(A)<\widetilde{\delta}$. Then
\[
\capa_1((D\cup A)\Delta D)<\widetilde{\delta}
\]
and so
\[
\sup_{i\ge k}\Vert Du_i\Vert(A)\le \sup_{i\ge k}\Vert Du_i\Vert(D\cup A)\le\eps/2< \eps.
\]
By Lemma \ref{lem:absolute cont of variation measure wrt capacity}, we find $\widehat{\delta}>0$ such that if $A\subset\Omega$ with $\capa_1(A)<\widehat{\delta}$, then $\Vert Du_i\Vert(A)<\eps$ for all $i=1,\ldots,k-1$. Finally, we let $\delta:=\min\{\widetilde{\delta},\widehat{\delta}\}$.
\end{proof}

\noindent Address:\\

\noindent Department of Mathematical Sciences\\
4199 French Hall West\\
University of Cincinnati\\
2815 Commons Way\\
Cincinnati, OH 45221-0025\\
P.O. Box 210025\\
E-mail: {\tt panu.lahti@aalto.fi}

\end{document}